\newcommand{\inprod}[2]{\left\langle #1, #2 \right\rangle}
\newcommand{\norm}[1]{\left\lVert #1 \right\rVert}
\newcommand{\abs}[1]{\left\lvert #1 \right\rvert}
\newcommand{\bb}[1]{\mathbb{#1}}
\newcommand{\dom}[1]{\mathrm{dom}{(#1)}}
\newcommand{\prox}[2]{\mathrm{Prox}_{#1}{\left(#2\right)}}
\newcommand{\proj}[2]{\mathrm{Proj}_{#1}{\left(#2\right)}}
\newcommand{\dd}{\mathrm{d}}
\DeclareMathOperator{\argmin}{argmin}
\newcommand{\xmark}{\ding{55}}%
\newtheorem{assumption}{Assumption}
\journalname{JOTA}
\title{An Inexact Halpern Iteration with Application to Distributionally Robust Optimization}
\author{Ling Liang $\cdot$ Zusen Xu $\cdot$ Kim-Chuan Toh $\cdot$ Jia-Jie Zhu}
\institute{Ling Liang, Corresponding author \at
             University of Maryland \\
              College Park, MD, USA 20742 \\
              liang.ling@u.nus.edu
              \and
              Zusen Xu \at 
              Weierstrass Institute for Applied Analysis and Stochastics \\
              Berlin, Germany 10117 \\
              xu@wias-berlin.de
            \and
              Kim-Chuan Toh  \at
              National University of Singapore \\
              Singapore 119076\\
              mattohkc@nus.edu.sg 
            \and 
            Jia-Jie Zhu \at 
            Weierstrass Institute for Applied Analysis and Stochastics \\
            Berlin, Germany 10117 \\
            zhu@wias-berlin.de
}
\date{Received: June 22, 2024 / Accepted: May 22, 2025}
\begin{document}

\maketitle

\begin{abstract}%
	The Halpern iteration for solving monotone inclusion problems has gained increasing interests in recent years due to its simple form and appealing convergence properties. In this paper, we investigate the inexact variants of the scheme in both deterministic and stochastic settings. We conduct extensive convergence analysis and show that by choosing the inexactness tolerances appropriately, the inexact schemes admit an $O(k^{-1})$ convergence rate in terms of the (expected) residue norm. Our results relax the state-of-the-art inexactness conditions employed in the literature while sharing the same competitive convergence properties. We then demonstrate how the proposed methods can be applied for solving two classes of data-driven Wasserstein distributionally robust optimization problems that admit convex-concave min-max optimization reformulations. We highlight its capability of performing inexact computations for distributionally robust learning with stochastic first-order methods and for general nonlinear convex-concave loss functions, which are competitive in the literature.
\end{abstract}

\keywords{Halpern iteration \and Monotone inclusion problem \and Convex-concave min-max optimization \and Data-driven Wasserstein distributionally robust optimization}
\subclass{90C25 \and  90C15 \and 90C17}

\noindent Communicated by Hailin Sun.

\section{Introduction}
\label{sec:introduction}

The monotone inclusion problem (also called the root-finding problem) is a mathematical problem that involves finding a solution/root satisfying an inclusion constraint, where the mapping involved is monotone. Mathematically, it has the following form 
\begin{equation}
    \label{eq-inclusion-G}
   \textrm{Find\;} z\in \bb{Z}\; \textrm{such that}\; 0\in G(z),
\end{equation}
where $\bb{Z}$ is a finite dimensional Euclidean space and $G:\bb{Z}\to2^\bb{Z}$ is a monotone set-valued mapping. Due to its wide applicability, the monotone inclusion problem \eqref{eq-inclusion-G} is of significant importance in a wide range of fields including optimization, variational analysis and machine learning \cite{rockafellar2009variational,facchinei2003finite,bauschke2011convex,rockafellar1976monotone,bottou2018optimization,zhang2022efficient}. As a representative example, in convex or convex-concave-min-max optimization problems, it is often necessary to find the root of a set-valued mapping that represents the (sub-)differential (which is monotone) of the objective function \cite{rockafellar2009variational}. Regarding algorithmic frameworks for solving \eqref{eq-inclusion-G}, fruitful numerical schemes have been proposed in the literature, which also result in many important applications. These existing algorithms include fixed-point iterative methods \cite{halpern1967fixed,iemoto2009approximating}, proximal point methods (or augmented Lagrangian methods) \cite{rockafellar1976monotone,luque1984asymptotic,cui2019r,liang2021inexact,yang2023corrected,zhao2010newton}, forward-backward splitting method  \cite{tseng2000modified,combettes2005signal,malitsky2020forward,cevher2021reflected,mainge2021fast}, operator splitting methods \cite{lions1979splitting,davis2017three}, (projected, optimistic or extra-) gradient methods \cite{cheney1959proximity,ceng2012extragradient,malitsky2015projected}, and their variants. We note that these algorithms are closely related to each other and may share similar convergence properties. 

In this paper, we focus on the case when $G$ is additionally assumed to be $\frac{1}{L}$-co-coercive with a constant $L>0$. Note that when $G$ is $\frac{1}{L} $-co-coercive, then $G$ is monotone (but it is not necessarily strongly monotone, consider e.g., constants mappings) and $L$-Lipschitz continuous. However, a monotone and Lipschitz continuous operator is not co-coercive generally. Consequently, co-coerciveness is an intermediate concept that lies between simple and strong monotonicity. Moreover, it is known that the gradient of a $L$-smooth convex function is $\frac{1}{L}$-co-coercive (see also in Lemma \ref{lemma:cvxcoco}). Consequently, the mapping $G$ naturally arises in a wide range of practical applications, especially in convex optimization problems with Lipschitz-smooth objective functions \cite{lan2020first}.

We first revisit the classic Halpern iteration, which dates back to \cite{halpern1967fixed}. For any given initial point $z^0\in \bb{Z}$, this method iteratively performs the following:
\begin{equation}
    \label{eq-halpern}
    z^{k+1} = \beta_k z^0 + (1-\beta_k)z^k - \eta_k G(z^k),\quad k\geq 0,
    \tag{\textrm{HI}}
\end{equation}
where $\beta_k\in (0,1)$ and $\eta_k > 0$ are parameters chosen suitably. One typical choice for these parameters would be $\beta_k = 1/(k+2)$ and $\eta_k:= (1 - \beta_k)/L$, for $k\geq 0$ \cite{tran2022connection}. Unlike classical algorithms, such as gradient-based methods that are specifically designed for solving optimization problems, Halpern iteration was originally developed to address fixed-point and inclusion problems, which encompass a broader class of optimization challenges. As a result, it can be readily extended to more general applications, including those discussed at the beginning of this paper. Additionally, while existing methods typically establish convergence rates in terms of the objective gap, Halpern iteration achieves an accelerated convergence rate with respect to the operator norm. These advantages have contributed to the growing interest in Halpern iteration in recent years. Though the Halpern iteration \eqref{eq-halpern} has a very simple form, it admits appealing convergence properties. The asymptotic convergence guarantees were established in \cite{wittmann1992approximation,leustean2007rates}. \cite{lieder2021convergence} first provided a direct proof of an $O(k^{-1})$ rate in terms of operator residue norm, i.e., $\norm{G(z^k)} = O(k^{-1})$. On the other hand, convergence analysis for \eqref{eq-halpern} and its variants based on potential functions has gained increasing attentions. For instance, \cite{diakonikolas2022potential} considered the potential function-based framework for minimizing gradient norms in convex and min-max optimization and argued that an $O(k^{-1})$ rate in terms of operator residue norm is perhaps the best one can hope for. \cite{yoon2021accelerated} considered a two-step variant of the Halpern iteration \eqref{eq-halpern} and applied it to min-max optimization problems. Recently, \cite{tran2022connection} revealed the connection between the Halpern iteration and the Nesterov's acceleration scheme \cite{nesterov1983method} which demonstrates deeper insights for both algorithms. 

However, existing works mostly assumed that $G$ can be evaluated exactly {though such an assumption holds only} in some special situations. In practical applications, such as the case when $G$ contains the resolvent of another operator, it is too restricted to require $G$ to have a closed-form expression. Particularly, when $G$ involves the projection onto a general convex set, such as a convex set taking the form of $\{z\in \bb{Z}\;:\; Az = b, Bz\leq d, z\in \cal{K}\}$ \footnote{Such a convex set arise from conic programming problems that are the most fundamental problems in optimization \cite{boyd2004convex}.}, where $A,B,c,d$ are given data and $\cal{K}$ is a convex cone, it can only be evaluated approximately by a certain iterative scheme. Therefore, it is natural to consider the inexact variant of the Halpern iteration for $k\geq 0$:
\begin{equation}
    \label{eq-inexact-halpern}
    \textrm{Find $\tilde{z}^k$ such that } \norm{G(z^k) - \tilde{z}^k}\leq \gamma_k,\; z^{k+1} = \beta_k z^0 + (1-\beta_k)z^k - \eta_k \tilde{z}^k,
    \tag{\textrm{iHI}}
\end{equation}
where $z^0\in \bb{Z}$ is the initial point and $\gamma_k \geq 0$ is the inexactness tolerance. However, to the best of our knowledge, the convergence properties of the above scheme have not been well-understood. The most related work considering this topic is perhaps \cite{diakonikolas2020halpern}. In particular, given a tolerance $\epsilon > 0$, \cite{diakonikolas2020halpern} considered replacing $G(z^k)$ with its approximation $\tilde{z}^k$ satisfying $\norm{G(z^k) - \tilde{z}^k} \leq \gamma_k := O \left( \epsilon / k^{2} \right)$ and established the $O(k^{-1})$ rate. However, the inexactness condition with an $O \left( \epsilon / k^{2} \right)$ tolerance can be 
{too stringent}, especially when $\epsilon$ is already small. Therefore, our first goal in the present paper is to conduct comprehensive convergence analysis that is suitable for a general co-coercive mapping $G$ with relaxed inexactness conditions for evaluating $G$. 

In practice, the mapping $G$ often involves a finite-sum structure. Therefore, stochastic variants of the Halpern fixed-point iterative scheme \eqref{eq-halpern} is necessary and is of independent interest. We mention some of the recent works that are related to this topic. Specifically, stochastic gradient methods for solving finite-sum quasi-strongly convex problems were analyzed in \cite{gower2019sgd,loizou2021stochastic} under the expected smoothness and co-coerciveness conditions. In \cite{cai2022stochastic}, inexact evaluation of $G$ in the Halpern iteration under a stochastic setting was considered, i.e., $G$ is estimated via random sampling and variance reduction through stochastic access. \cite{tran2023accelerated} considered two randomized block-coordinate optimistic gradient methods that are highly related to the Halpern iterative scheme \eqref{eq-halpern} for finite-sum maximal monotone inclusion problems. To the best of our knowledge, \cite{cai2022stochastic} might be the only work that considered the stochastic variants of \eqref{eq-halpern} directly. However, when $G$ contains the resolvent operator of another operator (see Section \ref{sec:finitesum} for more details) that can not be evaluated exactly, the results in \cite{cai2022stochastic} may not be applicable directly. Hence, combining theoretical guarantees under both deterministic and stochastic settings is needed to resolve this issue,  which is our second goal in this paper \footnote{A similar insight appeared in a different context in the independent work \cite{alacaoglurevisiting}, whose arXiv preprint \cite{alacaoglu2024extending} was released at the same time as ours.}. 

The capability of the inexact evaluation of the concerned mapping in the Halpern iteration further inspires us to investigate potential applicability of the method and its variants for solving modern data-driven optimization problems. In a traditional data-driven optimization problem, one makes a decision that generally performs well on a given training data set, denoted by $\{\hat \xi_i\}_{i = 1}^N \subseteq \Xi $, via solving the following empirical risk minimization (ERM) problem \cite{vapnik1991principles}:
\begin{equation}
\label{eq-erm}
	\min_{x \in \mathcal{X}}\; \bb{E}_{\xi\sim \hat{\mu}_N} [\ell(x,\xi)],
\end{equation}
where $\Xi \subseteq \bb{R}^d $ is the support set and $\mathcal{X}\subseteq \bb{R}^n$ is the feasible set that are both nonempty, $\hat{\mu}_N:= \frac{1}{N}\sum_{i = 1}^N \delta_{\hat{\xi}_i}$ denotes the empirical distribution with $\delta_\xi$ representing the Dirac distribution at $\xi\in\bb{R}^d$, and $\ell:\bb{R}^n\times \bb{R}^d\to \overline{\bb{R}}$ is the loss function with $\overline{\bb{R}}:=\bb{R}\cup\{\pm\infty\}$ denoting the extended reals. However, a decision performing well on the training data set may not perform well on unseen testing data sets, which is partially due to the discrepancy between the empirical distribution and the true (unknown) distribution governing the random variable $\xi\in \Xi$. For example, the ERM model is reported to be vulnerable to adversarial attacks such as adding small perturbations to the training data \cite{szegedy2013intriguing}. One popular way to improve robustness in decision-making processes is to include more informative distributions in \eqref{eq-erm} so that the obtained decision perform well for all data sets governed by these distributions. In other words, instead of making decisions via minimizing the expected loss $ \bb{E}_{\xi\sim \hat{\mu}_N} [\ell(x,\xi)] $, it is often preferable to consider minimizing the worst-case expected loss over a set of distributions near the empirical distribution $\hat{\mu}_N$, namely the ambiguity set. This approach is known as distributionally robust optimization (DRO) \cite{delageDistributionallyRobustOptimization2010}; see \cite{rahimian2019distributionally,rahimian2022frameworks,blanchet2024distributionally} for recent surveys of DRO. To the best of our knowledge, the Halpern iteration has not been applied for solving DRO in the literature. Thus, the third goal of this paper is to investigate the possibility of applying the Halpern iteration and its inexact variants for solving some important data-driven DRO problems, such as the data-driven Wasserstein DRO (WDRO) problems; see Section \ref{sec:WDRO} for more details.

\begin{table}[!]
    \centering
    \begin{tabular}{|c|c|c|c|l|} \hline 
         Work & Problem & Inexactness  & Convergence \\ \hline
         \cite{halpern1967fixed,wittmann1992approximation,leustean2007rates} & $z = G(z)$ & \xmark  & z$^k\to z^*$  \\ \hline
         \cite{lieder2021convergence} & $z = G(z)$ & \xmark &  $\|z^k-G(z^k)\| = O(\frac{1}{k})$  \\ \hline
         \cite{yoon2021accelerated} & $\displaystyle\min_x\max_yL(x,y)$ &   \xmark & $\|\nabla L(x^k,y^k)\|^2 = O(\frac{1}{k^2})$  \\ \hline
         \cite{tran2022connection} & $0\in G(z)$ & \xmark & $\|G(z^k)\|^2 = O(\frac{1}{k^2})$ \\ \hline 
         \cite{diakonikolas2020halpern,cai2022stochastic} & $0\in G(z)$ &  $O (\frac{\epsilon}{k^2})$ & $\|G(z^k)\|,\mathbb{E}[\|G(z^k)\|]= O(\frac{1}{k}) + O(\epsilon)$ \\ \hline 
         ours & $0\in G(z)$ & $O(\frac{\epsilon}{\sqrt{k}})$ &  $\|G(z^k)\|,\mathbb{E}[\|G(z^k)\|]= O(\frac{1}{k}) + O(\epsilon)$ \\ 
          & & $\widetilde{O}(\frac{1}{k^{3/2}})$ & $\|G(z^k)\|,\mathbb{E}[\|G(z^k)\|]= O(\frac{1}{k})$ \\ \hline 
    \end{tabular}
    \caption{Halpern iterations. $L$ is a convex-concave loss, $\epsilon\geq 0$ is a given tolerance and $k\geq 1$ denotes the iteration count. Inexactness stands for the inexact condition used for approximating $G$ and a \xmark-mark means that $G$ must be evaluated exactly.}
    \label{tab:halpern}
\end{table}

The contributions of this paper can be summarized as follows. We extensively analyze the inexact variants of the classical Halpern fixed-point iterative scheme in both deterministic and stochastic scenarios. We show, in particular, that by selecting appropriate inexactness tolerances, the $ O(k^{-1}) $ convergence rate of the residue norm, i.e., $\norm{G(z^k)}$, remains true. We also provide the same convergence rate for $\norm{z^{k+1} - z^k}$ as a byproduct. When the mapping $G$ admits a finite-sum structure, by using an efficient ProbAbilistic Gradient Estimator (PAGE) as in \cite{cai2022stochastic,li2021page}, we show that the stochastic Halpern iteration has the same $O(k^{-1})$ convergence rate in expectation, under some mild conditions. We emphasize that our inexactness conditions relax the current state-of-the-art conditions used in the literature; See Table \ref{tab:halpern} for a comparison between several Halpern-type methods. Moreover, we demonstrate how one can apply the Halpern iteration and its variants for solving two important data-driven WDRO problems. In particular, we demonstrate that the proposed techniques can be applied to the associated monotone inclusion problem obtained from their convex-concave min-max optimization reformulations for the Wasserstein distributionally robust {learning with generalized linear models} and the WDRO problems with convex-concave loss functions. The {adaptation} of the Halpern iterations for solving WDRO problems is novel as it allows one {to evaluate} the underlying mapping (e.g., the projection mapping onto a closed convex set) inexactly, which makes the proposed methods attractive. Moreover, given the deep connection between the Halpern iteration and the Nesterov's acceleration scheme \cite{tran2022connection}, our results could offer more insights into the inexact and stochastic variants of Nesterov's acceleration scheme, thus inspiring their applicability in solving DRO problems. Importantly, many existing WDRO algorithms are specifically designed for small-to-medium-scale problems with special loss functions that admit exact reformulations. In contrast, we show an inexact stochastic Halpern iteration that can solve general nonlinear convex-concave WDRO problems.

The rest of this paper is organized as follows. We first provide some notation and definitions that are necessary for our later exposition in Section \ref{sec:preliminary}. We next analyze the inexact variants of the Halpern iteration in both deterministic and stochastic settings in Section \ref{sec:iHalpern} and Section \ref{sec:finitesum}, respectively. Then, in Section \ref{sec:WDRO}, we show how the presented methods can be useful for solving two important classes of WDRO problems. To validate the practical performance of the proposed framework, we conduct some preliminary numerical experiments in Section \ref{sec:numerical}. Finally, we summarize the paper in Section \ref{sec:conclusions}. We defer all the technical lemmas and necessary proofs to the appendix. 

\section{Notation and Definitions}
\label{sec:preliminary}

We use $\bb{X}$,  $\bb{Y}$ and $\bb{Z}$ to denote finite dimensional Euclidean spaces equipped with the standard inner product $\inprod{\cdot}{\cdot}$ and the induced Euclidean norm $\norm{\cdot}$. Moreover, we use $\|\cdot\|_\infty$ to denote the infinity norm. Let $n$ be a positive integer, the standard basis for $\bb{R}^n$ is denoted as $\{e_1,\dots, e_n\}$ and the vector of all ones in $\bb{R}^n$ is denoted as $\mathds{1}_n$. We use $\overline{\bb{R}}$ to represent the extended reals $\bb{R}\cup\{\pm\infty\}$. Let $f:\bb{X}\to\bb{R}$ be a differentiable function, its gradient at a point $x\in \bb{X}$ is denoted as $\nabla f(x)$. If $\bb{X} = \bb{R}$, i.e., $f$ is a univariate function, then its derivative is denoted as $f'$. If $f:\bb{X}\times\bb{Y}\to\bb{R}$ is a differentiable {function} of two vector variables, we use $\nabla_xf(x,y)$ and $\nabla_yf(x,y)$ to denote the partial derivatives with respect to $x\in \bb{X}$ and $y\in \bb{Y}$, respectively. In this way, the gradient of $f$ at the point $(x,y)$ is denoted as $\nabla f(x,y) = [\nabla_xf(x,y);\nabla_yf(x,y)]$, where $[\cdot;\dots;\cdot]$ is the column vector stacked from its components.

\textbf{Convex functions.} The effective domain of a function $f:\bb{X}\rightarrow \overline{\bb{R}}$ is denoted as $\dom{f}$ on which $f$ takes finite values and we say that $f$ is convex if, for any $x_1,x_2\in \dom{f}$ and $\alpha\in [0,1]$, it holds that $f(\alpha x_1 + (1-\alpha)x_2) \leq \alpha f(x_1) + (1-\alpha)f(x_2)$. Note that $f$ is concave if $-f$ is convex. If $f$ is not identically $+\infty$, we say that  $f$ is a proper function. Moreover, $f$ is closed if it is lower semi-continuous. For any $x\in \dom{f}$, the sub-gradient of $f$ at $x$ is defined as $\partial f(x) := \{ v \in \bb{X}: f(x')\geq f(x) + \inprod{v}{x'-x},\; \forall x\in \bb{X} \}$. For a proper, closed and convex function, the proximal mapping of $f$ at $x$ is defined as $\mathrm{Prox}_f(x):= \argmin_{x'\in\bb{X}}\left\{ f(x') + \frac{1}{2}\norm{x' - x}^2\right\}$. Let $\cal{C}$ be a nonempty closed convex set, we use $\cal{I}_{\cal{C}}(\cdot)$ to denote the indicator function for $\cal{C}$. Then, it is clear that $\cal{I}_{\cal{C}}(\cdot)$ is a closed proper convex function. In this case, we use $\proj{\cal{C}}{\cdot} \equiv \mathrm{Prox}_{\cal{I}_{\cal{C}}}(\cdot)$ to denote the projection onto the set $\cal{C}$.

\textbf{Set-valued mappings.} Let $G:\bb{X}\rightrightarrows 2^\bb{X}$ be a set-valued mapping, we use $\dom{G}:=\left\{x\in \bb{X}:G(x)\neq \emptyset\right\}$ to denote its domain and use $\mathrm{graph}(G):=\left\{(x,x')\in \bb{X}\times \bb{X}: x'\in G(x)\right\}$ to denote its graph. Then the inverse of $G$ is denoted as $G^{-1}(x'):=\left\{x\in\bb{X}: x'\in G(x)\right\}$. We say that $G$ is single-valued if $G(x)$ is a singleton for any $x\in \bb{X}$.

\textbf{Monotonicity.} A set-valued mapping $G:\bb{X}\rightrightarrows 2^\bb{X}$ is said to be monotone if 
\[
    \left\langle x_1'  - x_2', x_1 - x_2\right\rangle \geq 0,\quad \forall x_1,\;x_2\in \dom{G},\; x_1'\in G(x_1),\; x_2'\in G(x_2).
\]
If there exists $\alpha > 0$ such that 
\[
    \left\langle x_1'  - x_2', x_1 - x_2\right\rangle \geq \alpha \left\lVert x_1 - x_2\right\rVert^2,
\]
for $\forall x_1,\;x_2\in \dom{G},\; x_1'\in G(x_1),\; x_2'\in G(x_2)$, then we say that $G$ is $\alpha$-strongly monotone. Let $G$ be a monotone mapping, if the graph of $G$ is not properly contained in the graph of any other monotone operator, then $G$ is said to be maximally monotone. Note that for a closed proper convex function $f:\bb{X}\to\overline{\bb{R}}$, its sub-gradient $\partial f$ is maximally monotone. 

\textbf{Resolvent operators.} For a monotone operator $G:\bb{X}\rightrightarrows 2^\bb{X}$, we denote its resolvent operator as $J_G:=(I+G)^{-1}$, where $I$ denotes the identity mapping. It is easy to verify that when $G$ is monotone, $J_G$ is single-valued with $\dom{G} = \bb{X}$. Note also that for a closed, proper and convex function $f:\bb{X}\to \overline{\bb{R}}$ and any positive scalar $\alpha\in \bb{R}$, it holds that $J_{\alpha \partial f}(x) = \prox{\alpha f}{x}$. In particular, for any closed convex set $\cal{C}\subseteq \bb{X}$ and any positive scalar $\alpha\in \bb{R}$, it holds that $J_{\alpha\partial\cal{I}_{\cal{C}}}(\cdot) = \proj{\cal{C}}{\cdot}$, i.e., the projection operator onto $\cal{C}$.

\textbf{Lipschitz continuity.} Let $f:\bb{X}\to\overline{\bb{R}}$ be a function, if there exists $L\geq 0$ such that 
\[
    \left\lvert f(x_1) - f(x_2)\right\rvert \leq L\left\lVert x_1 - x_2\right\rVert,\quad \forall x_1,x_2\in \dom{f},
\]
then $f$ is said to be $L$-Lipschitz continuous. A differentiable function $f$ is said to be $L$-smooth with modulus $L\geq 0$ if 
\[
    \left\lVert \nabla f(x_1) - \nabla f(x_2)\right\rVert \leq L\left\lVert x_1 - x_2\right\rVert,\quad \forall x_1,x_2\in \dom{f}.
\]
Let $G:\bb{X}\rightrightarrows 2^\bb{X}$ be a single-valued mapping, if there exists $L\geq 0$ such that
\[
    \left\lVert G(x_1) - G(x_2)\right\rVert \leq L\left\lVert x_1 - x_2\right\rVert,\quad \forall x_1,x_2\in \dom{G},
\]
then $G$ is said to be $L$-Lipschitz continuous. 

\textbf{Co-coerciveness.} Let $L > 0$. A set-valued mapping $G:\bb{X}\rightrightarrows 2^\bb{X}$ is said to be $\frac{1}{L}$-co-coercive if the following relation holds:
\[
    \left\langle G(x_1) - G(x_2), x_1 - x_2\right\rangle \geq \frac{1}{L}\left\lVert G(x_1) - G(x_2)\right\rVert^2,\quad \forall x_1,x_2\in \dom{G}.
\]
In the particular case when $L = 1$, we also say that $G$ is firmly non-expansive. Moreover, if $G$ is monotone, then its resolvent operator $J_G$ is firmly non-expansive. It is well-known that for any proper, closed and convex function $f:\bb{X}\to\overline{\bb{R}}$, its proximal mapping is firmly non-expansive, i.e.,
\begin{equation*}
    \inprod{\prox{f}{x_1} - \prox{f}{x_2}}{x_1 - x_2} \geq \norm{\prox{f}{x_1} - \prox{f}{x_2}}^2,\; \forall x_1,\;x_2\in \bb{X}. 
\end{equation*}
The following lemma will be useful in our later analysis.
\begin{lemma}[{\cite[Theorem 18.15]{bauschke2011convex}}]
\label{lemma:cvxcoco}
The gradient $\nabla f$ of a convex and $L$-smooth function $f:\bb{X}\to \overline{\bb{R}}$ is $\frac{1}{L}$-co-coercive.
\end{lemma}

Let $E:\bb{X}\rightrightarrows 2^\bb{X}$ and $F:\bb{X}\rightrightarrows 2^\bb{X}$ be two maximal monotone operators. Consider the set-valued mapping 
\[
    G(x):= \frac{1}{\alpha}\left( x - J_{\alpha E}\left( x - \alpha F(x)\right) \right) ,\quad \forall\; x\in \bb{X}.
\]
The following lemma shows that $G(x):=\frac{1}{\alpha}\left(x - J_{\alpha E}(x - \alpha F(x))\right)$  is co-coercive if $F$ is co-coercive under appropriate condition on $\alpha$; see, e.g., \cite[Proposition 26.1]{bauschke2011convex} for a proof of the lemma.
\begin{lemma}
    \label{lemma:G-coco}
    Suppose that $F$ is $\frac{1}{L}$-co-coercive, and $0<\alpha < \frac{4}{L}$. Then, $G$ is $\frac{\alpha(4-\alpha L)}{4}$-co-coercive.
\end{lemma}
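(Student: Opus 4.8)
The plan is to verify the defining inequality of $\frac{\alpha(4-\alpha L)}{4}$-co-coercivity for $G$ directly, by combining the monotonicity of $E$ (read off from the resolvent identity) with the $\frac{1}{L}$-co-coercivity of $F$ and a single completion of squares; the restriction $0<\alpha<4/L$ will be used only to make the resulting constant $\frac{\alpha(4-\alpha L)}{4}$ strictly positive, so that the conclusion is a genuine co-coercivity statement. (One could instead simply invoke \citep[Proposition 26.1]{bauschke2011convex}, but the direct argument is short.) First I would note that a $\frac{1}{L}$-co-coercive operator is necessarily single-valued with full domain, so $F(x)$, and hence $G(x)$, is well defined, while $J_{\alpha E}$ is single-valued since $E$ is maximal monotone.

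Fix $x_1,x_2\in\bb{X}$ and set $p_i:=J_{\alpha E}\!\left(x_i-\alpha F(x_i)\right)$, so that $G(x_i)=\frac{1}{\alpha}(x_i-p_i)$, equivalently $p_i=x_i-\alpha G(x_i)$. The resolvent identity $p_i=(I+\alpha E)^{-1}(x_i-\alpha F(x_i))$ gives $(x_i-\alpha F(x_i))-p_i\in\alpha E(p_i)$; substituting $p_i=x_i-\alpha G(x_i)$ collapses this to the clean inclusion $G(x_i)-F(x_i)\in E(p_i)$. Monotonicity of $E$ then yields
\[
\inprod{(G(x_1)-F(x_1))-(G(x_2)-F(x_2))}{p_1-p_2}\ \ge\ 0 ,
\]
and, writing $a:=G(x_1)-G(x_2)$, $b:=F(x_1)-F(x_2)$, $d:=x_1-x_2$ and using $p_1-p_2=d-\alpha a$, this is $\inprod{a-b}{d-\alpha a}\ge 0$, i.e.
\[
\inprod{a}{d}\ \ge\ \alpha\norm{a}^2+\inprod{b}{d}-\alpha\inprod{a}{b} .
\]

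Next I would invoke the $\frac{1}{L}$-co-coercivity of $F$ in the form $\inprod{b}{d}\ge\frac{1}{L}\norm{b}^2$, which gives $\inprod{a}{d}\ge\alpha\norm{a}^2+\frac{1}{L}\norm{b}^2-\alpha\inprod{a}{b}$. It then remains to check the elementary bound
\[
\alpha\norm{a}^2+\tfrac{1}{L}\norm{b}^2-\alpha\inprod{a}{b}\ \ge\ \tfrac{\alpha(4-\alpha L)}{4}\norm{a}^2 ,
\]
which, after transposing the right-hand side and computing $\alpha-\tfrac{\alpha(4-\alpha L)}{4}=\tfrac{\alpha^2 L}{4}$, is exactly
\[
\tfrac{1}{L}\norm{b}^2-\alpha\inprod{a}{b}+\tfrac{\alpha^2 L}{4}\norm{a}^2\ =\ \norm{\tfrac{1}{\sqrt{L}}\,b-\tfrac{\alpha\sqrt{L}}{2}\,a}^2\ \ge\ 0 .
\]
Chaining the two displays gives $\inprod{G(x_1)-G(x_2)}{x_1-x_2}\ge\tfrac{\alpha(4-\alpha L)}{4}\norm{G(x_1)-G(x_2)}^2$ for all $x_1,x_2\in\bb{X}$, which is the claim. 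I do not anticipate a real obstacle here: the only points needing care are the bookkeeping that takes the resolvent identity to $G(x_i)-F(x_i)\in E(p_i)$, and recognizing that the leftover quadratic form is a perfect square — which is precisely what pins the constant down to $\frac{\alpha(4-\alpha L)}{4}$ rather than anything larger, and explains why $(0,4/L)$ is the natural range for $\alpha$.
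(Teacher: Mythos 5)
Your proof is correct. The paper itself does not prove this lemma at all --- it simply points to \citep[Proposition 26.1]{bauschke2011convex} --- so your contribution is a self-contained verification of what the paper outsources. The argument is sound at every step: the inclusion $G(x_i)-F(x_i)\in E(p_i)$ follows correctly from unwinding the resolvent and substituting $p_i=x_i-\alpha G(x_i)$; monotonicity of $E$ applied at $p_1,p_2$ gives $\inprod{a-b}{d-\alpha a}\ge 0$; combining with $\inprod{b}{d}\ge\frac{1}{L}\norm{b}^2$ and the identity $\alpha-\frac{\alpha(4-\alpha L)}{4}=\frac{\alpha^2L}{4}$ reduces everything to the perfect square $\norm{\frac{1}{\sqrt{L}}b-\frac{\alpha\sqrt{L}}{2}a}^2\ge 0$. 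This is essentially the standard computation underlying the averagedness of the forward--backward operator $J_{\alpha E}\circ(I-\alpha F)$ that the cited proposition packages (composition of a firmly nonexpansive resolvent with an averaged forward step), but written as one explicit chain of inner-product inequalities rather than via the calculus of averaged operators. The direct route has the advantage of exposing exactly where the constant $\frac{\alpha(4-\alpha L)}{4}$ comes from and why $\alpha\in(0,4/L)$ is the natural range; the citation route is shorter and reuses general machinery. Your observation that the hypothesis $0<\alpha<4/L$ is used only to make the modulus positive (the inequality itself holds for every $\alpha>0$) is accurate and worth keeping.
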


We refer the reader to \cite{rockafellar1997convex,bauschke2011convex} for more advanced topics on convex analysis and monotone operator theory.

\section{An inexact Halpern fixed-point iterative scheme}
\label{sec:iHalpern}
As mentioned in the introduction, in the ideal case when $G(z)$ can be evaluated exactly for any $z\in \bb{Z}$, the exact Halpern iterative scheme \eqref{eq-halpern} is guaranteed to possess an $O(k^{-1})$ rate. In this section, we illustrate that when the inexactness tolerance $\gamma_k$ satisfies some mild conditions, the inexact version \eqref{eq-inexact-halpern} shares a similar convergence rate. We assume for the rest of this paper that the problem \eqref{eq-inclusion-G} admits at least one solution.

The main convergence properties of the inexact Halpern iteration \eqref{eq-inexact-halpern} are presented in the following theorem, whose proof is provided in Appendix \ref{proof-thm-rate}.

\begin{theorem}
	\label{thm-convergence-rate}
	Suppose that $G$ is $\frac{1}{L}$-co-coercive for a constant $L>0$. Let $\{z^k\}$ be generated by \eqref{eq-inexact-halpern} with $\beta_k = 1/(k+2)$ and $\eta_k = (1-\beta_k)/L$, then it holds that
	\begin{align}
		\norm{G(z^k)}^2 \leq &\;  \frac{\left(7L\norm{z^0 - z^*} + 10\sqrt{\sum_{i = 0}^{k-1}(i+1)^2\gamma_i^2}\right)^2}{(k+1)(k+2)},\quad k\geq 0, \label{thm-rate-1}\\
		\norm{z^{k+1} - z^k}^2 \leq &\; \frac{8\left(7L\norm{z^0 - z^*} + 11\sqrt{\sum_{i = 0}^{k}(i+1)^2\gamma_i^2}\right)^2 }{L^2(k+1)(k+2)}, \quad k\geq 0,\label{thm-rate-2}
	\end{align}
	where $z^* \in \bb{Z}$ is any solution such that $G(z^*) = 0$.
\end{theorem}

As we can see in the {above} theorem, the choice of the tolerance $\gamma_k$ would affect the rate of convergence of the proposed inexact Halpern  iteration \eqref{eq-inexact-halpern}. In particular, we have the following corollary, which relaxes the inexactness condition proposed in \cite{diakonikolas2020halpern}. The results in the corollary follow from Theorem \ref{thm-convergence-rate} directly, and we omit the proof. 

\begin{corollary}
	\label{corollary-epsilon}
	Let $ \epsilon \in (0,1] $ be given and assume that $G$ is $\frac{1}{L}$-co-coercive for a constant $L>0$. Suppose that $\{z^k\}$ is the sequence generated by \eqref{eq-inexact-halpern} with $\beta_k = 1/(k+2)$, $\eta_k = (1-\beta_k)/L$, and $\gamma_k:= \epsilon/\sqrt{k+1}$ for $k\geq 0$. Then, it holds that
	\[
		\norm{G(z^k)} \leq O(k^{-1}) + O(\epsilon),\quad \norm{z^{k+1} - z^k}\leq O(k^{-1}) + O(\epsilon),\quad  k\geq 0.
	\]
\end{corollary}

To the best of our knowledge, existing results for the convergence rate of the Halpern iteration are mainly stated in a similar form of Corollary \ref{corollary-epsilon}. However, in the case when $\epsilon$ is small, i.e., more accurate approximate solution is needed, the inexact condition with $\gamma_k:=O(\epsilon/\sqrt{k})$ requires evaluating $G$ with a relatively high accuracy even in the early stages of algorithm. We argue that this may be a disadvantage in a practical implementation since it may require more computational efforts. To alleviate this issue, the next corollary (whose proof is also omitted for simplicity) shows that it is possible to start with a relatively large tolerance and progressively decrease its value. But the speed of decrease for this choice of $\gamma_k$ is faster than that of Corollary \ref{corollary-epsilon}.

\begin{corollary}
	\label{corollary-large}
	Suppose that the sequence $ \{(k+1)^2\gamma_k^2\} $ is summable and assume that $G$ is $\frac{1}{L}$-co-coercive for a constant $L>0$. Let $\{z^k\}$ be the sequence generated by \eqref{eq-inexact-halpern} with $\beta_k = 1/(k+2)$, and $\eta_k = (1-\beta_k)/L$ for $k\geq 0$. Then, it holds that
	\[
		\norm{G(z^k)} \leq O(k^{-1}),\quad \norm{z^{k+1} - z^k}\leq O(k^{-1}),\quad k\geq 0.
	\]
\end{corollary}

Note that the condition that $ \{(k+1)^2\gamma_k^2\} $ is summable is quite mild.  For example, we can choose $\gamma_k:= (k+1)^{-a}$ with $a > 3/2$, for $k\geq 0$. Then, when $k+1 < \epsilon^{-1/(a - 1/2)}$, it holds that ${\gamma_k=}(k+1)^{-a} > \epsilon/\sqrt{k+1}$. 
Hence, $\gamma_k$ is less stringent than the tolerance required
in Corollary \ref{corollary-epsilon}. Moreover, after $K:=O(\epsilon^{-1/(a - 1/2)})$ iterations, the proposed algorithm is already able to produce an approximate solution $z^K$ such that $\norm{G(z^K)} \leq O(\epsilon^{1/(a - 1/2)})$, which may be good enough for termination. Nevertheless, we have shown that by choosing $\gamma_k$ appropriately, the inexact Halpern iteration \eqref{eq-inexact-halpern} computes an  $O(\epsilon)$-optimal solution, i.e., $\norm{G(z^k)}\leq O(\epsilon)$ within $O(\epsilon^{-1})$ iterations. 

\section{A stochastic Halpern iteration with PAGE}
\label{sec:finitesum}
In many modern applications in operation research and machine learning, $G$ usually admits a finite-sum structure. To see this, let $E:\bb{Z}\to 2^\bb{Z}$ and $F_i:\bb{Z}\to 2^\bb{Z}$, for $i = 1,\dots, N$, be maximal monotone operators. In addition, $F_i$ is assumed to be $\frac{1}{L_0}$-co-coercive for any $i = 1,\dots, N$. Then, many real-world applications can be reformulated as the following inclusion problem: 
\begin{equation}
	0\in E(z) + F(z):= E(z) + \frac{1}{N}\sum_{i = 1}^N F_i(z),\quad z\in \bb{Z},
	\label{eq-finitesum}	
\end{equation}
which is equivalent to the following new problem involving the resolvent operator of $E$:
\begin{equation}
	\label{eq-finitesum-G}
	0\in G(z):= \frac{1}{\alpha}\left(z - J_{\alpha E}\left(z -  \frac{\alpha}{N}\sum_{i = 1}^NF_i(z)\right)\right),\quad z\in \bb{Z}.
\end{equation}
Then, from Lemma \ref{lemma:G-coco}, we see that $G$ is $ \frac{\alpha(4-\alpha L_0)}{4} $-co-coercive provided that $\alpha \in (0, 4/L_0)$. Therefore, the results established in Theorem \ref{thm-convergence-rate} can be applied directly to guarantee
the $O(k^{-1})$ convergence rate in terms of the {residue} norm of $G$ with suitably chosen inexact tolerance $\gamma_k$. 

However, when $N$ is large, which is typically the case in practical applications, evaluating the sum $ \frac{1}{N}\sum_{i = 1}^NF_i(z) $ can be costly. This motivates us to consider stochastic/randomized variants of the inexact Halpern fixed-point iterative scheme \eqref{eq-inexact-halpern}. Specifically, given $z^0\in \bb{Z}$, it is natural to consider performing the following steps for $k\geq 0$:
\begin{equation}
\label{eq-isHalpern}
\left\{
	\begin{aligned}
		&\; \textrm{Find a stochastic estimator $ \widetilde{F}(z^k) $ satisfying $ \bb{E}\left[\norm{F(z^k) - \widetilde{F}(z^k)}^2\right] \leq \frac{1}{2} \sigma_k^2 $}, \\ 
		&\; 	\textrm{Find $\bar{z}^k$ such that } \norm{J_{\alpha E}\left(z^k - \alpha \widetilde{F}(z^k) \right) - \bar{z}^k}\leq \frac{\sqrt{2}}{2}\alpha \gamma_k,\\
		&\; \tilde{z}^k = \frac{1}{\alpha}(z^k - \bar{z}^k), \quad  z^{k+1} = \beta_k z^0 + (1-\beta_k)z^k - \eta_k \tilde{z}^k, 
	\end{aligned}
	\right.
    \tag{\textrm{isHI}}
\end{equation}
where $ \alpha\in (0,4/L_0)$, $\beta_k = 1/(k+2)$, $\eta_k  = (1-\beta_k)/L$ with $L:= 4/(\alpha(4-\alpha L_0))$, $\gamma_k \geq 0$ and $\sigma_k\geq 0$ are chosen appropriately. Here, we use $\bb{E}$ to denote the expectation with respect to all the randomness at any iteration of \eqref{eq-isHalpern}.

\begin{remark}
\label{remark-stochastic-halpern}	
We remark here that \cite{cai2022stochastic} considered only the case when $\gamma_k = 0$ for all $k\geq 0$ with $J_{\alpha E}$ denoting the projection onto a certain closed convex set. Hence, when $J_{\alpha E}$ can not be evaluated exactly or $J_{\alpha E}$ is the resolvent mapping for a general mapping $E$, the results in \cite{cai2022stochastic} can not be directly applicable.
\end{remark}

Let us assume temporally the existence of the desired stochastic estimator $\widetilde{F}$. Then, we can state our expected convergence rates in the following theorem. See Appendix \ref{proof-thm-rate-ishi} for a proof.
\begin{theorem}
	\label{thm-rate-ishi}
	Suppose that $G$ is $\frac{1}{L}$-co-coercive for a constant $L>0$. Let $\{z^k\}$ be the sequence generated by \eqref{eq-isHalpern} for solving \eqref{eq-finitesum-G}. Then it holds that 
	\begin{align}
		\bb{E}\left[\norm{G(z^k)}^2\right] \leq &\;  \frac{\left(7L\norm{z^0 - z^*} + 10\sqrt{\sum_{i = 0}^{k-1}(i+1)^2(\sigma_i + \gamma_i)^2}\right)^2}{(k+1)(k+2)},\quad k\geq 0, \label{eq-thm-rate-ishi-G}	\\
		\bb{E}\left[\norm{z^{k+1} - z^k}^2\right] \leq &\; \frac{8\left(7L\norm{z^0 - z^*} + 11\sqrt{\sum_{i = 0}^{k}(i+1)^2(\sigma_i + \gamma_i)^2}\right)^2 }{L^2(k+1)(k+2)}, \quad k\geq 0,\label{eq-thm-rate-ishi-Z}
	\end{align}
	where $z^*$ is a solution such that $G(z^*) = 0$.
\end{theorem}

Now, to guarantee that the scheme \eqref{eq-isHalpern} is well-defined, we need to ensure the existence of a stochastic estimator $\widetilde{F}$. If we let $\widetilde{F}(z^k) = F(z^k)$,  then the condition in the first line of \eqref{eq-isHalpern} trivially holds. On the other hand, the variance reduction techniques provide efficient ways to derive stochastic gradient estimators that admit {desirable} properties, such as a lower sample complexity than {that of} the vanilla stochastic gradient estimator. Motivated by the recent interest in developing variance reduced methods for solving general optimization problems, in this paper, we consider the popular ProbAbilistic Gradient Estimator (PAGE) \cite{li2021page,cai2022stochastic}. In particular, we have the following result that recursively quantifies the variance of the PAGE variance-reduced estimator, under standard conditions. To present the result, we need the following condition on the boundedness of the variance of the random query of $F$. The assumption of unbiased samples with bounded variance is widely used in the literature for analyzing the convergence properties of stochastic algorithms, such as stochastic gradient descent and its variants. This assumption helps control the inherent randomness introduced by stochastic gradient estimators; see, for example, \cite{ghadimi2016mini,lan2020first,li2021page,cai2022stochastic}. Without bounded variance, the updates could become excessively erratic, causing the iterates to oscillate unpredictably and hindering convergence to the optimal solution.
\begin{assumption}
    \label{assumption-bounded-var}
    There exists a positive constant $\sigma$ such that for any random index $i$ drawn from a given distribution, it holds that $\bb{E}\left[F_i(z)\right] = F(z)$ and $\bb{E}\left[\norm{F_i(z) - F(z)}^2\right]\leq \sigma^2$, for all $z\in \bb{Z}$,
	where the above expectations are with respect to $i$, with slight abuse of notation.
\end{assumption}
Then, for a carefully chosen tolerance $\sigma_k$, one is able to find a stochastic estimator $\tilde{F}(z^k)$ satisfying the condition in the first line of \eqref{eq-isHalpern} by choosing the sample sizes appropriately. 

\begin{lemma}
	\label{lemma-PAGE}
	Let $S^{(1)}_k$ and $ S^{(2)}_k $ be two random minibatch i.i.d. samples with $\abs{S^{(1)}_k} = N^{(1)}_k$ and $\abs{S^{(2)}_k} = N^{(2)}_k$, and $p_k\in (0,1]$ be a given probability at the $k$-th iteration of \eqref{eq-isHalpern} such that $p_0 = 1$. For $ k\geq 0 $, define
	\[
		\widetilde{F}(z^{k}):= \left\{
			\begin{array}{ll}
				\displaystyle \frac{1}{N^{(1)}_k}\sum_{i \in S^{(1)}_k}F_i(z^k), & \textrm{with probability $ p_k $}, \\
				\displaystyle \widetilde{F}(z^{k-1}) + \frac{1}{N^{(2)}_k}\sum_{i\in S^{(2)}_k}\left(F_i(z^{k}) - F_i(z^{k-1}) \right), & \textrm{with probability $ 1- p_k $}.
			\end{array}
		\right.
	\]
	Suppose that Assumption \ref{assumption-bounded-var} holds. Then, for all $k\geq 1$, it holds that 
	\begin{equation}
	\label{eq-page-1}	
	   \begin{aligned}
	       &\; \bb{E}\left[\norm{\widetilde{F}(z^{k}) - F(z^{k})}^2\right] \\
        \leq &\; \frac{p_k\sigma^2}{N^{(1)}_k} + (1-p_k)\left(\bb{E}\left[\norm{\widetilde{F}(z^{k-1}) - F(z^{k-1})}^2 \right] + \frac{L_0^2}{N^{(2)}_k}\bb{E}\left[ \norm{z^{k} - z^{k-1}}^2\right] \right).
	   \end{aligned}
	\end{equation}
	Moreover, let $\epsilon>0$ and $a>0$ be given parameters, and choose $\sigma_k:= \frac{\epsilon}{(k+1)^a}$, $p_k = 1- \frac{\left(\frac{k}{k+1}\right)^{2a}}{2 - \left(\frac{k}{k+1}\right)^{2a+1}}$, $N^{(1)}_k = \left\lceil\frac{2\sigma^2}{\epsilon^2(k+1)^{-2a}} \right\rceil$, and $N^{(2)}_k = \left\lceil\frac{2L_0^2\norm{z^k - z^{k-1}}^2}{\epsilon^2(k+1)^{-(2a+1)}}\right\rceil$, for $k\geq 0$. (Note that $N^{(2)}_0$ is not needed.) Then, it holds that
	\begin{equation}\label{eq-page-2}
	\bb{E}\left[\norm{\widetilde{F}(z^{k}) - F(z^{k})}^2\right] \leq \sigma_k^2,\quad k\geq 0.	
	\end{equation}
\end{lemma}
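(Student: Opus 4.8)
The plan is two-fold: first establish the one-step variance recursion \eqref{eq-page-1} by conditioning on the past and splitting on the PAGE coin flip, and then deduce \eqref{eq-page-2} by an induction on $k$ in which the stated choice of $p_k$ turns out to be exactly the value that closes the recursion.

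For \eqref{eq-page-1}, I would let $\mathcal{F}_{k-1}$ be the $\sigma$-field generated by all the randomness used to produce $z^{k-1}$, $z^k$ and $\widetilde{F}(z^{k-1})$, so that $z^{k-1}$, $z^k$, $\widetilde{F}(z^{k-1})$ --- and hence the batch sizes $N_k^{(1)}$, $N_k^{(2)}$ (the latter depending only on $\norm{z^k-z^{k-1}}$) --- are $\mathcal{F}_{k-1}$-measurable, while the coin flip and the fresh minibatches $S_k^{(1)}$, $S_k^{(2)}$ are independent of $\mathcal{F}_{k-1}$. Conditioning on $\mathcal{F}_{k-1}$ and splitting on the coin: on the event of probability $p_k$, $\widetilde{F}(z^k)$ is the average of $N_k^{(1)}$ i.i.d.\ copies of $F_i(z^k)$, each unbiased for $F(z^k)$ with variance at most $\sigma^2$ by Assumption \ref{assumption-bounded-var}, so the conditional mean-squared error is at most $\sigma^2/N_k^{(1)}$. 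On the complementary event I decompose
\[
\widetilde{F}(z^k) - F(z^k) = \big(\widetilde{F}(z^{k-1}) - F(z^{k-1})\big) + B_k,
\]
where $B_k := \frac{1}{N_k^{(2)}}\sum_{i\in S_k^{(2)}}\big(F_i(z^k)-F_i(z^{k-1})\big) - \big(F(z^k)-F(z^{k-1})\big)$; the first summand is $\mathcal{F}_{k-1}$-measurable and $B_k$ has zero conditional mean, so the cross term vanishes and the conditional mean-squared error equals $\norm{\widetilde{F}(z^{k-1})-F(z^{k-1})}^2 + \bb{E}[\norm{B_k}^2\mid\mathcal{F}_{k-1}]$. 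Since $B_k$ is an average of $N_k^{(2)}$ i.i.d.\ zero-mean terms, $\bb{E}[\norm{B_k}^2\mid\mathcal{F}_{k-1}] \le \frac{1}{N_k^{(2)}}\bb{E}[\norm{F_i(z^k)-F_i(z^{k-1})}^2\mid\mathcal{F}_{k-1}] \le \frac{L_0^2}{N_k^{(2)}}\norm{z^k-z^{k-1}}^2$, the last inequality using that a $\frac{1}{L_0}$-co-coercive operator is $L_0$-Lipschitz (by Cauchy--Schwarz). Taking the convex combination of the two cases with weights $p_k$ and $1-p_k$, and then applying the tower rule, yields \eqref{eq-page-1}.

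For \eqref{eq-page-2} I would argue by induction on $k$. When $k=0$ we have $p_0=1$, so $\widetilde{F}(z^0)$ is a minibatch average and the base case follows from $\bb{E}[\norm{\widetilde{F}(z^0)-F(z^0)}^2]\le \sigma^2/N_0^{(1)}\le \frac{1}{2}\epsilon^2\le \epsilon^2=\sigma_0^2$, by the choice of $N_0^{(1)}$. For the inductive step, the ceilings in the definitions of $N_k^{(1)}$ and $N_k^{(2)}$ give $\sigma^2/N_k^{(1)}\le \frac{1}{2}\epsilon^2(k+1)^{-2a}$ and, pathwise, $\frac{L_0^2}{N_k^{(2)}}\norm{z^k-z^{k-1}}^2\le \frac{1}{2}\epsilon^2(k+1)^{-(2a+1)}$; substituting these, together with the inductive hypothesis $\bb{E}[\norm{\widetilde{F}(z^{k-1})-F(z^{k-1})}^2]\le \sigma_{k-1}^2=\epsilon^2k^{-2a}$, into \eqref{eq-page-1}, it suffices (after dividing by $\epsilon^2(k+1)^{-2a}$ and writing $r:=k/(k+1)$) to verify that
\[
\frac{p_k}{2} + (1-p_k)\Big(r^{-2a} + \frac{1-r}{2}\Big) \le 1,
\]
which a short rearrangement shows is equivalent to $1-p_k \le \frac{r^{2a}}{2-r^{2a+1}}$ --- an identity for the stated value of $p_k$. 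One also checks that $p_k\in(0,1]$ with $p_0=1$, which boils down to the elementary bound $r^{2a}(1+r)<2$ for $r\in[0,1)$.

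The only genuinely delicate point is the measurability bookkeeping in the first part: one must make sure $N_k^{(2)}$ is $\mathcal{F}_{k-1}$-measurable so that the conditional second moment of $B_k$ is indeed $(N_k^{(2)})^{-1}$ times that of a single sample, and one must dispose of the degenerate case $z^k=z^{k-1}$ --- for which the stated formula would give $N_k^{(2)}=0$ --- by observing that the correction term is then identically zero, so $N_k^{(2)}$ may be taken to be $1$ without loss. Everything else is a routine combination of unbiasedness, the bounded-variance Assumption \ref{assumption-bounded-var}, the $L_0$-Lipschitz property implied by $\frac{1}{L_0}$-co-coercivity, and elementary algebra.
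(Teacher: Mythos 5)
Your proposal is correct and follows essentially the same route as the paper: the bound $\norm{F_i(z')-F_i(z)}\le L_0\norm{z'-z}$ from co-coercivity feeds the PAGE variance recursion \eqref{eq-page-1}, and the induction for \eqref{eq-page-2} closes because the stated $p_k$ makes $1-p_k=\frac{(k/(k+1))^{2a}}{2-(k/(k+1))^{2a+1}}$ exactly the value balancing the three terms, which is the same algebra the paper carries out. The only difference is that you prove the recursion \eqref{eq-page-1} from scratch via conditioning and a bias--variance split (also handling the measurability of $N_k^{(2)}$ and the degenerate case $z^k=z^{k-1}$), whereas the paper simply cites Lemma~2.1 of \citet{cai2022stochastic}; your version is a self-contained and correct substitute for that citation.
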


The proof of Lemma \ref{lemma-PAGE} can be found in Appendix \ref{proof-lemma-PAGE}. For the rest of this paper, we always assume that $\widetilde{F}$ is constructed according to Lemma \ref{lemma-PAGE} with the same choices of parameters. By choosing $\sigma_k, \gamma_k$ similarly as in Corollary \ref{corollary-epsilon} and Corollary \ref{corollary-large}, we can get similar convergence rates that are presented in the following corollaries, whose proofs are provided in Appendix \ref{proof-cor-E} and Appendix \ref{proof-cor-large-E}, respectively. We can also see that the expected convergence rate given by Corollary \ref{corollary-epsilon-E} is the same as that of \cite{cai2022stochastic}.

\begin{corollary}
	\label{corollary-epsilon-E}
	Suppose that $G$ is $\frac{1}{L}$-co-coercive for a constant $L>0$. Let $ \epsilon \in (0,1] $ be given, $\sigma_k = \gamma_k = \epsilon/\sqrt{k+1}$ for $k\geq 0$ and Assumption \ref{assumption-bounded-var} holds. Suppose that $\{z^k\}$ is the sequence generated by \eqref{eq-isHalpern}. Then, it holds that
	\begin{align*}
	    \bb{E}\left[\norm{G(z^k)}^2\right] \leq &\;  O\left(\frac{1}{(k+1)(k+2)}\right) + O(\epsilon^2),\quad k\geq 0,\\ 
        \bb{E}\left[\norm{z^{k+1} - z^k}^2\right] \leq &\;  O\left(\frac{1}{(k+1)(k+2)}\right) + O(\epsilon^2), \quad k\geq 0.
	\end{align*}
	Hence, after $K:=O(\epsilon^{-1})$ iterations, \eqref{eq-isHalpern} computes an approximate solution $z^K$ such that 
	\[
		\bb{E}\left[\norm{G(z^K)}^2\right] \leq O(\epsilon^2),\quad \bb{E}\left[\norm{z^{K+1} - z^K}^2\right] \leq O(\epsilon^2).
	\]
	Moreover, the sample complexity to get an expected $O(\epsilon)$-optimal solution is $ O\left(\epsilon^{-3}\right) $.
\end{corollary}

\begin{corollary}
	\label{corollary-large-E}
	Suppose that $G$ is $\frac{1}{L}$-co-coercive for a constant $L>0$, the sequence $ \sigma_k = \gamma_k = (k+1)^{-a} $ where $a > \frac{3}{2}$, and Assumption \ref{assumption-bounded-var} holds. Let $\{z^k\}$ be the sequence generated by \eqref{eq-isHalpern}. Then, it holds that
	\[
		\begin{aligned}
		    \bb{E}\left[\norm{G(z^k)}^2\right] \leq &\; O\left(\frac{1}{(k+1)(k+2)}\right),\\
      \bb{E}\left[\norm{z^{k+1} - z^k}^2\right] \leq &\; O\left(\frac{1}{(k+1)(k+2)}\right), 
		\end{aligned}
	\]
	for $k\geq 0$. Hence, after $K:=O(\delta^{-1})$ iterations with $\delta \in (0,1)$, \eqref{eq-isHalpern} computes an approximate solution $z^K$ such that 
	\[
		\bb{E}\left[\norm{G(z^K)}^2\right] \leq O(\delta^2),\quad \bb{E}\left[\norm{z^{K+1} - z^K}^2\right] \leq O(\delta^2).
	\]
	Moreover, the sample complexity to get an expected $O(\delta)$-optimal solution is at most $ O\left(\delta^{-2a}\right) $.
\end{corollary}

We remark again that the variance tolerance $\sigma_k$ in Corollary \ref{corollary-epsilon-E} (and \cite{cai2022stochastic}) is roughly chosen as $O(\epsilon/\sqrt{k})$ with a specified tolerance $\epsilon > 0$. In the case when $\epsilon$ is small, one would need  large sample sizes in the early iterations of the Halpern iterative scheme; See also \cite[Corollary 2.2]{cai2022stochastic}. On the contrary, our result enables us to start with larger tolerances, hence it may require smaller sample sizes in the early iterations.

\section{Data-driven Wasserstein distributionally robust optimization} 
\label{sec:WDRO}

In this section, we are interested in the following Wasserstein distributionally robust optimization (WDRO) \cite{zhaoDatadrivenRiskaverseStochastic2018,mohajerin2018data}:
\begin{equation}
\label{eq-wdro}
		\min_{x \in \mathcal{X}}\; \sup_{\mu\in \bb{B}_\theta(\hat{\mu}_N)}\; \bb{E}_{\xi\sim \mu} [\ell(x,\xi)],
\end{equation}
where the ambiguity set is defined as the Wasserstein ball centered at $\hat{\mu}_N$ with radius $\theta > 0$. In particular, $\bb{B}_\theta(\hat{\mu}_N):=\{ \mu\in \mathcal{P}(\Xi): \int_{\Xi\times \Xi}c(\xi_1,\xi_2)\pi(\dd \xi_1, \dd \xi_2) \leq \theta,\; \pi \in \Pi(\mu,\hat{\mu}_N)\}$, where $\mathcal{P}(\Xi)$ denotes the space of all distributions supported on $\Xi$, $\Pi(\mu,\nu )$ denotes the joint distribution over $\Xi\times \Xi$ with marginals $\mu$ and $\nu$, respectively, and $c:\Xi\times\Xi\to\overline{\bb{R}}$ is the transportation cost function. 

It is worth noting that robustness promoted by WDRO \eqref{eq-wdro} comes with a price in the sense that evaluating the worst-case expected loss 
\begin{equation}
\label{eq-WDRO-inner-loss}
    \sup_{\mu\in \bb{B}_\theta(\hat{\mu}_N)}\; \bb{E}_{\xi\sim \mu} [\ell(x,\xi)]
\end{equation}
is difficult in general since it involves functional variables.
	In fact, even checking whether a given distribution $ \mu \in \mathcal{P}(\Xi) $ belongs to $ \bb{B}_\theta(\hat{\mu}_N) $ is already challenging \cite{tacskesen2022semi}. Therefore, unless in some special situations  (see e.g., \cite{shafieezadeh2018wasserstein}), solving \eqref{eq-wdro} directly is impractical. Fortunately, under mild conditions, \eqref{eq-WDRO-inner-loss} admits a dual problem and the strong duality holds \cite{blanchet2019quantifying}. Utilizing the resulting dual problem and the strong duality, it is possible to reformulate the original WDRO problem \eqref{eq-wdro} as a 
{finite-dimensional} convex program, under additional conditions such as the convexity and concavity of $\ell$ in its first and second arguments, respectively, convexity of the feasible set $\mathcal{X}$ and the support set $\Xi$, and a certain growth conditions of $\ell$ with respect to the second argument \cite{shafieezadeh2015distributionally,lee2015distributionally,mohajerin2018data,gao2022distributionally}. To the best of {our} knowledge, most existing works relied on special structure of the loss function $\ell$ such that the reformulated finite convex program can be solved via off-the-shelf solvers such as Gurobi, Mosek, CPLEX and IPOPT \cite{wachter2006implementation}. Although these solvers have shown their high efficiency in computing accurate solutions robustly for small-to-medium-scale problems, they inevitably consume too much computational resource when solving large-scale problems. Thus, when the size of the training data set, $N$, is large, these solvers become less attractive or even inapplicable to solve the reformulated convex program.
Our method can operate in the inexact and stochastic settings, and is not limited by the loss function types $\ell$ previously considered.

To alleviate the curse of dimensionality, recent years have witnessed growing interests in applying (deterministic and/or stochastic) first-order methods for solving the WDRO problem \eqref{eq-wdro} and its special cases. Next, we mention some of the recent works along this direction. \cite{shafieezadeh2018wasserstein} applied a Frank-Wolfe algorithm for solving the nonlinear semidefinite programming reformulation of the Wasserstein distributionally robust Kalman filtering problem where the ambiguity set contains only normal distributions. One of the appealing features of the Frank-Wolfe algorithmic framework is that the corresponding search direction can be found in a quasi-closed form. \cite{li2019first} proposed a linear proximal alternating direction method of multipliers (ADMM) for Wasserstein distributionally robust logistic regression and established a sublinear convergence rate in terms of objective function values. Later, an epigraphical projected-based incremental method was applied for solving the Wasserstein distributionally robust support vector machine \cite{li2020fast}. The convergence properties of the previous method were also established by using a H\"{o}lderian growth condition with an explicit growth exponent. In \cite{luo2019decomposition}, a class of Wasserstein distributionally robust regression problems were reformulated as decomposable semi-infinite programs and a cutting-surface method was applied and analyzed. Stochastic gradient descent (SGD) method was also applied for solving a class of Wasserstein distributionally robust supervised learning (WDRSL) problems with a locally strongly convex loss function \cite{blanchet2022optimal}. The SGD method was also applied for training the optimal transport-based distributionally robust semi-supervised learning tasks in \cite{blanchet2020semi}. The corresponding sample complexity and iteration complexity were also provided. More recently, \cite{yu2022fast} considered formulating WDSRLs as structural min-max optimization problems which were solved by stochastic extra-gradient algorithms. In \cite{yu2022fast}, the ideas of variance reduction and random reshuffling were investigated for solving the corresponding min-max optimization problem.

Following the research theme in solving WDRO problems, in this section, we consider two important classes of WDRO problems that admit tractable convex or convex-concave minimax programming reformulations and hence can then be reformulated as monotone inclusion problems. The first class of WDRO problems assumes that the loss function $\ell$ is chosen as the generalized linear model, and the second problem class considers the case when $\ell$ is convex-concave. 

\subsection{Wasserstein distributionally robust supervised learning} 
\label{sec:WDRSL}
The training data set in a classical supervised learning task typically contains pairs of data defining features and their supervision. We consider the task of binary classification, where the support set $\Xi$ (also called the feature-label space) has the form $\bb{R}^{d-1}\times \{-1,1\}$ and the training data set can be expressed as $\left\{\hat\xi_i:=(\hat\phi_i,\hat\psi_i)\right\}_{i = 1}^N$ with $\hat\phi_i\in \bb{R}^{d-1}$ and $\hat\psi_i\in \{-1,1\}$.  We consider the following Wasserstein distributionally robust supervised learning problem:
\begin{equation}
\label{eq-WDRSL}	
	\min_{w\in \Gamma}\;\sup_{\mu\in \bb{B}_\theta(\hat{\mu}_N)}\;\bb{E}_{\xi:=(\phi,\psi)\sim\mu}\left[\Psi_0(w) + \Psi(\inprod{\phi}{w}) - \psi\inprod{\phi}{w}\right]
\end{equation}
where $ \Gamma\subseteq \bb{R}^{d-1} $ is a closed convex set, $\Psi_0,\Psi:\bb{R}\to \bb{R}$ are $\overline{L}_0$-smooth, $\widetilde{L}_0$-Lipschitz continuous and convex functions with $\bar{L}_0,\widetilde{L}_0 > 0$. Note that in this paper, we consider the more general setting with $w\in \Gamma$, while in \cite{shafieezadeh2015distributionally,yu2022fast}, $\Gamma = \bb{R}^{d-1}$. It is obvious that one can incorporate prior information into a decision by selecting a specific option for the set $\Gamma$. The function $\Psi_0$ in \eqref{eq-WDRSL}  serves as a regularizer and is often used in machine learning applications. For instance, a popular choice for the regularizer is $\Psi_0(w) = \frac{\rho}{2}\norm{w}^2$ where $\rho \geq 0$ is the regularization parameter. Moreover, we can see that model \eqref{eq-WDRSL} is quite general since it covers many important real-world applications, including the Wasserstein distributionally robust Logistic regression \cite{shafieezadeh2015distributionally,li2019first}.

We define the transportation cost function $c:\Xi\times \Xi \to \bb{R}$ as 
\[
c(\xi_1,\xi_2):= \norm{\phi_1 - \phi_2} + \kappa \abs{\psi_1 - \psi_2},\quad \xi_1:=(\phi_1,\psi_1),\; \xi_2:= (\phi_2,\psi_2),
\]
where the parameter $ \kappa>0 $ denotes the relative importance of feature mismatch and label uncertainty. Then, we can show in the following proposition that problem \eqref{eq-WDRSL} is equivalent to a convex-concave min-max optimization problem.  The reformulation is derived mostly by combining previously published results; See e.g., \cite{mohajerin2018data,shafieezadeh2015distributionally,yu2022fast}. And the key step for deriving the convex-concave reformulation \eqref{eq-WDRSL-minmax} is to consider the dual reformulation of the inner maximization problem for a fixed decision variable $\beta$. Since the proof of the proposition can be done word-by-word as in \cite{yu2022fast}, we omit it here for simplicity.
\begin{proposition}
	\label{prop-WDRSL-minmax}
	The Wasserstein distributionally robust learning with generalized linear model \eqref{eq-WDRSL} is equivalent to
	\begin{equation}
	\label{eq-WDRSL-minmax}
	\min_{x:=(w,\lambda) \in \bb{R}^d}\; \max_{y \in \bb{R}^N}\; f(x,y):=\frac{1}{N}\sum_{i = 1}^Nf_{i}(x,y) \quad 
		 \mathrm{s.t.}\quad  x\in \mathcal{X},\; y\in \mathcal{Y},
	\end{equation}
	where $f:\bb{R}^d\times \bb{R}^N\to \bb{R}$, $\mathcal{X}\subseteq \bb{R}^d$ and $\mathcal{Y}\subseteq \bb{R}^N$ are defined as:
\[
\left\{
	 \begin{aligned}
	 	f_{i}(x,y):=  &\; \Psi_0(w) + \lambda(\theta - \kappa) + \Psi\left(\inprod{\hat\phi_i}{w}\right) + y_i \left(\hat\psi_i\inprod{\hat\phi_i}{w} - \lambda \kappa\right),\; i \in [N], \\
	 	\mathcal{X}:= &\; \left\{x:=(w,\lambda)\in\bb{R}^d: \norm{w}\leq \frac{\lambda}{\widetilde{L}_0+1},\; \omega\in \Gamma\right\}, \\
	 	\mathcal{Y}:= &\;\left\{y\in \bb{R}^N: \norm{y}_\infty \leq 1\right\}.
	 \end{aligned} 
	 \right.
\]
\end{proposition}

By the definition of the function $f_{i}$ for $i = 1, \dots, N$, we see that 
\[
	\nabla f_{i}(x,y) = 
	\begin{pmatrix}
		\Psi_0'(w)+\Psi'\left(\inprod{\hat\phi_i}{w}\right)\hat\phi_i + y_i \hat\psi_i \hat\phi_i \\
		\displaystyle \theta - \kappa -  y_i\kappa \\
		\left(\hat\psi_i\inprod{\hat\phi_i}{w} - \lambda \kappa\right) e_i
	\end{pmatrix},\;  x:=(w,\lambda) \in \bb{R}^d,\; y \in \bb{R}^N,
\]
where $e_i$ is the $i$-th standard basis vector for $\bb{R}^N$. Using the expression of $\nabla f_{i}$, one can check that $f_i$ has a Lipschitz continuous gradient (see Lemma \ref{lemma-fi-Lip}) with a common modulus $L_0>0$, for $\forall i$.

Recall that $f_{i}$ is convex-concave, and Lemma \ref{lemma:cvxcoco} implies that the mapping 
\[
	F_{i}(z):= F_{i}(x,y):= \begin{pmatrix}
		\nabla_x f_{i}(x,y) \\
		-\nabla_y f_{i}(x,y)
	\end{pmatrix} = \begin{pmatrix}
		\Psi_0'(w) + \Psi'\left(\inprod{\hat\phi_i}{w}\right)\hat\phi_i + y_i \hat\psi_i \hat\phi_i \\
		\displaystyle \theta - \kappa - \kappa y_i \\
		-\left(\hat\psi_i\inprod{\hat\phi_i}{w} - \lambda \kappa\right) e_i
	\end{pmatrix},
\]
is $\frac{1}{L_0}$-co-coercive. Hence, the problem \eqref{eq-WDRSL-minmax} is a convex-concave min-max optimization problem with a finite-sum structure that can be solved via applying the presented Halpern iterations for the mapping $G$ defined similarly as in \eqref{eq-finitesum-G}:
\begin{equation}
	\label{eq-finitesum-G-WDRSL}
	0\in G(z) = G(x,y):= \frac{1}{\alpha} \begin{pmatrix}
		\displaystyle x - \proj{\mathcal{X}}{x - \frac{\alpha}{N}\sum_{i = 1}^N\nabla_x f_{i}(x,y)} \\
		\displaystyle y - \proj{\mathcal{Y}}{y +  \frac{\alpha}{N}\sum_{i = 1}^N \nabla_y f_i(x,y)} \\
	\end{pmatrix} ,
\end{equation}
where $z := (x,y)\in \bb{Z}:=\mathbb{R}^d\times \mathbb{R}^N$ and $ \alpha\in \left(0, \frac{4}{L_0}\right) $ is a given parameter. Note that we are interested in the case when the number of data points, i.e., $N$, is extremely large, so that evaluating the function $f$ and its gradient can be expensive. {Thus, it is preferable to approximate the finite sum by a certain stochastic estimator.} By applying Corollary \ref{corollary-epsilon-E} and Corollary \ref{corollary-large-E}, we immediately get the following result.

\begin{corollary} 
    \label{corollary-WDRSL}
    Let $\{z^k\}$ be the sequence generated by \eqref{eq-isHalpern} with the mapping $G$ given by \eqref{eq-finitesum-G-WDRSL} and $ \epsilon \in (0,1] $ be given. Suppose that Assumption \ref{assumption-bounded-var} holds and one of the following two conditions holds: (1) $\sigma_k = \gamma_k = \epsilon/\sqrt{k+1}$ for $k\geq 0$; (2) $ \sigma_k = \gamma_k = (k+1)^{-a} $ where $a > \frac{3}{2}$. Then, after $K:=O(\epsilon^{-1})$ iterations, \eqref{eq-isHalpern} computes an approximate solution $z^K$ such that 
	\[
		\bb{E}\left[\norm{G(z^K)}^2\right] \leq O(\epsilon^2),\quad \bb{E}\left[\norm{z^{K+1} - z^K}^2\right] \leq O(\epsilon^2).
	\]
	Moreover, the sample complexity to get an expected $O(\epsilon)$-optimal solution is at most $O\left(\epsilon^{-3}\right)$ and $ O\left(\epsilon^{-2a}\right) $, respectively for the above two conditions.
\end{corollary}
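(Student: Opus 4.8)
The plan is to show that the min-max problem \eqref{eq-WDRSL-minmax} is a particular instance of the finite-sum monotone inclusion \eqref{eq-finitesum}--\eqref{eq-finitesum-G}, and then to invoke the machinery already developed in Section \ref{sec:finitesum}. Concretely, I would take $E$ to be the normal-cone operator of the closed convex set $\mathcal{X}\times \mathcal{Y}$, so that $E$ is maximal monotone and its resolvent $J_{\alpha E}$ is exactly the projection $\proj{\mathcal{X}\times\mathcal{Y}}{\cdot}$ appearing in \eqref{eq-finitesum-G-WDRSL}; and I would take $F_i$ to be the saddle-subdifferential mapping $z=(x,y)\mapsto(\nabla_x f_i(x,y),-\nabla_y f_i(x,y))$. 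Since Proposition \ref{prop-WDRSL-minmax} guarantees that each $f_i$ is convex in $x$ and concave in $y$, and Lemma \ref{lemma-fi-Lip} gives that $\nabla f_i$ is Lipschitz with a common modulus $L_0$ over all $i$, Lemma \ref{lemma:cvxcoco} yields that each $F_i$ is maximal monotone and $\tfrac{1}{L_0}$-co-coercive. Consequently the mapping $G$ defined by \eqref{eq-finitesum-G-WDRSL} coincides with the $G$ of \eqref{eq-finitesum-G}, and Lemma \ref{lemma:G-coco} shows that $G$ is co-coercive with the constant used to set $L$ in \eqref{eq-isHalpern}, so Theorem \ref{thm-rate-ishi} applies.

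Next I would make the scheme \eqref{eq-isHalpern} concrete for this $G$. Under Assumption \ref{assumption-bounded-var}, I would build the estimator $\widetilde F$ through the PAGE recursion of Lemma \ref{lemma-PAGE} with the prescribed choices of $\sigma_k$, $p_k$, $N^{(1)}_k$, $N^{(2)}_k$; this delivers $\bb{E}\left[\norm{\widetilde F(z^k)-F(z^k)}^2\right]\le \sigma_k^2$, which is precisely the variance requirement in the first line of \eqref{eq-isHalpern} (up to the harmless factor $\tfrac12$). The second line of \eqref{eq-isHalpern} asks only for a point $\bar z^k$ within distance $\tfrac{\sqrt2}{2}\alpha\gamma_k$ of $\proj{\mathcal{X}\times\mathcal{Y}}{z^k-\alpha\widetilde F(z^k)}$: the projection onto $\mathcal{Y}$ is the trivial box projection, while the projection onto $\mathcal{X}$ (a second-order cone intersected with $\Gamma\times\bb{R}$) can be approximated to any prescribed accuracy, which is exactly the situation the inexactness tolerance $\gamma_k$ is designed to accommodate. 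Hence \eqref{eq-isHalpern} is well-defined for the mapping \eqref{eq-finitesum-G-WDRSL}.

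With these identifications in place, the conclusion follows by direct substitution into the two corollaries of Section \ref{sec:finitesum}. Under condition (1) I would apply Corollary \ref{corollary-epsilon-E} with $\sigma_k=\gamma_k=\epsilon/\sqrt{k+1}$, which after $K=O(\epsilon^{-1})$ iterations gives $\bb{E}\left[\norm{G(z^K)}^2\right]\le O(\epsilon^2)$ and $\bb{E}\left[\norm{z^{K+1}-z^K}^2\right]\le O(\epsilon^2)$ with total sample complexity $O(\epsilon^{-3})$. Under condition (2) I would apply Corollary \ref{corollary-large-E} with $\sigma_k=\gamma_k=(k+1)^{-a}$ and $\delta=\epsilon$, obtaining the same $O(\epsilon^2)$ bounds after $K=O(\epsilon^{-1})$ iterations but with sample complexity $O(\epsilon^{-2a})$. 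Assembling the two cases yields the stated result.

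The genuinely non-routine part is not the convergence estimate itself --- that is inherited essentially verbatim from Corollaries \ref{corollary-epsilon-E} and \ref{corollary-large-E} --- but verifying the structural hypotheses that license their use: namely, (i) that $\nabla f_i$ is Lipschitz with a modulus $L_0$ \emph{uniform} in $i$, which through Lemma \ref{lemma-fi-Lip} rests on the smoothness constants of $\Psi_0$ and $\Psi$ together with control of $\norm{\hat\phi_i}$ across the data set, and (ii) that Assumption \ref{assumption-bounded-var} is genuinely in force, i.e.\ the random query $F_i$ has variance uniformly bounded by $\sigma^2$; since $\mathcal{Y}$ is compact whereas $\mathcal{X}$ need not be, one should either argue that the iterate-dependent quantities remain bounded along the trajectory or simply read Assumption \ref{assumption-bounded-var} as the standing hypothesis that makes this true. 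Everything past that point is bookkeeping.
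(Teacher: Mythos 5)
Your proposal is correct and follows essentially the same route as the paper: it identifies $E$ as the normal-cone operator of $\mathcal{X}\times\mathcal{Y}$ and $F_i$ as the saddle operator of $f_i$, uses Lemma \ref{lemma-fi-Lip} and Lemma \ref{lemma:cvxcoco} (via Lemma \ref{lemma:G-coco}) to place \eqref{eq-finitesum-G-WDRSL} in the framework of Section \ref{sec:finitesum}, and then reads off the conclusion from Corollaries \ref{corollary-epsilon-E} and \ref{corollary-large-E}, which is exactly how the paper obtains the result.
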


By the definitions of $\mathcal{X}$ and $\mathcal{Y}$, we can see that the projection onto the set $\mathcal{Y}$ can be performed analytically, i.e., 
\[
	\proj{\mathcal{Y}}{y} = 
 \big({\rm sgn}(y_i) \min\{1,|y_i|\}\big)_{i=1}^N,  \quad \forall \; y\in \bb{R}^{N}.
\]
Moreover, when $\Gamma = \bb{R}^{d-1}$, the set $\mathcal{X}$ reduces to the so-called ice cream cone and also admits an analytical expression \cite{bauschke1996projection}, i.e., 
\[
	\proj{\mathcal{X}}{x} = 
	\left\{
		\begin{array}{ll}
			(w, \lambda), & \textrm{if $ \norm{w}\leq \frac{\lambda}{\widetilde{L}_0 + 1} $}, \\
			(0,0), & \textrm{if $ \frac{1}{\widetilde{L}_0 + 1}\norm{w}\leq -\lambda  $}, \\
			\frac{\frac{1}{\widetilde{L}_0 + 1}\norm{w} + \lambda}{\frac{1}{(\widetilde{L}_0 + 1)^2} +1}\left(\frac{1}{(\widetilde{L}_0 + 1)\norm{w}}w, 1 \right), & \textrm{otherwise},
		\end{array}
		\right.,
\]
for all $x:=(w, \lambda)\in \bb{R}^d$. However, when $\Gamma\neq \bb{R}^{d-1}$, $\proj{\mathcal{X}}{\cdot}$ may not be evaluated exactly in general. In this case, one needs to rely on iterative schemes and is only able to obtain an approximate projection. Nevertheless, the proposed inexact Halpern iterations ensures the desirable convergence properties under mild conditions.

\subsection{{WDRO} with convex-concave loss functions}\label{sec:wdro-cc}
In this subsection, we consider the $p$-Wasserstein DROs $ (p\geq 1) $ with convex-concave loss functions. In particular, the loss function $\ell:\mathcal{X}\times \Xi\to \overline{\bb{R}}$ is assumed to be $L_0$-smooth and convex-concave where $L_0>0$, and $\mathcal{X}\subseteq \bb{R}^n$ is the feasible region and $\Xi\subseteq \bb{R}^d$ denotes the support set, and the transportation cost is chosen as $ c(\xi',\xi) := d^p(\xi'
, \xi) $, for all $ \xi',\xi\in \Xi $, where $d$ is a metric on $\Xi$. For the rest of this section, we assume that $\mathcal{X}$ is a convex compact set and $(\Xi,d)$ is a Polish space. We note that we do not assume the loss function $\ell$ is necessarily of one of the special types considered in previous works such as quadratic or logistic. Therefore, our method is more general in the sense that we propose an inexact stochastic solver for general nonlinear convex-concave WDRO problems.

From \cite{gao2022distributionally} (see also Lemma \ref{lemma-polish-dual}), we see that the WDRO problem \eqref{eq-wdro} is equivalent to the following min-max optimization problem:
\begin{equation}
\label{eq-minmax-wdro}
\min_{x\in\bb{R}^n}\; \max_{\xi_i \in \bb{R}^d}\; \frac{1}{N}\sum_{i = 1}^N\ell(x,\xi_i) \quad \mathrm{s.t.}\quad 	x\in \mathcal{X}, \; \sum_{i=1}^Nd^p(\xi_i, \hat{\xi}_i) \leq N\theta^p,\; \xi_i\in \Xi,\; i \in[N].
\end{equation}
For notational simplicity, let us denote $\hat{y}:=[\hat{\xi}_1;\dots;\hat{\xi}_N]\in \bb{R}^{Nd}$ as the training data set, and $y:=[\xi_1;\dots, \xi_N]\in \bb{R}^{Nd}$. Then problem \eqref{eq-minmax-wdro} can be written compactly as 
\[
\min_{x\in\mathcal{X}}\;\max_{y\in\mathcal{Y}}\; \frac{1}{N}\sum_{i=1}^N f_i(x,y),
\]
where $f_i(x,y):= \ell(x, \xi_i),\; i \in[N]$, and 
\[
\mathcal{Y}:=  \left\{y=[\xi_1;\dots;\xi_N]: \sum_{i=1}^Nd^p(\xi_i, \hat{\xi}_i) \leq N\theta^p,\; \xi_i\in \Xi,\; i \in [N]\right\}.
\]
Since $\ell$ is $L_0$-smooth and convex-concave, similar to Section \ref{sec:WDRSL}, the proposed inexact Halpern schemes can be applied to solve the above convex-concave min-max optimization problem by reformulating it as a finite-sum inclusion problem as follows:
\begin{equation}
    \label{eq-G-WDRO-cvx-loss}
    0 \in G(z) = G(x, y) := \frac{1}{\alpha} \begin{pmatrix}
		\displaystyle x - \proj{\mathcal{X}}{x - \frac{\alpha}{N}\sum_{i = 1}^N\nabla_x f_{i}(x,y)} \\
		\displaystyle y - \proj{\mathcal{Y}}{y +  \frac{\alpha}{N}\sum_{i = 1}^N \nabla_y f_i(x,y)} \\
	\end{pmatrix}.
\end{equation}
Moreover, in the particular cases when $N$ is large and the sets $\mathcal{X}$ and $\mathcal{Y}$ do not guarantee explicit projections, \eqref{eq-isHalpern} {with the proposed variance reduced stochastic estimator} becomes useful. Similar to Corollary \ref{corollary-WDRSL}, we have the following convergence results when applying \eqref{eq-isHalpern} for solving \eqref{eq-G-WDRO-cvx-loss}.

\begin{corollary} 
    \label{corollary-WDRO-cvx-loss}
    Let $\{z^k\}$ be the sequence generated by \eqref{eq-isHalpern} with the mapping $G$ given by \eqref{eq-G-WDRO-cvx-loss} and $ \epsilon\in (0,1] $ be given. Suppose that Assumption \ref{assumption-bounded-var} holds and one of the following two conditions holds: (1) $\sigma_k = \gamma_k = \epsilon/\sqrt{k+1}$ for $k\geq 0$; (2) $ \sigma_k = \gamma_k = (k+1)^{-a} $ where $a > \frac{3}{2}$. Then, after $K:=O(\epsilon^{-1})$ iterations, \eqref{eq-isHalpern} computes an approximate solution $z^K$ such that 
	\[
		\bb{E}\left[\norm{G(z^K)}^2\right] \leq O(\epsilon^2),\quad \bb{E}\left[\norm{z^{K+1} - z^K}^2\right] \leq O(\epsilon^2).
	\]
	Moreover, the expected sample complexity to get an expected $O(\epsilon)$-optimal solution is at most $O\left(\epsilon^{-3}\right)$ and $ O\left(\epsilon^{-2a}\right) $, respectively for the above two conditions.
\end{corollary}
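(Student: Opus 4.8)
The plan is to show that the inclusion \eqref{eq-G-WDRO-cvx-loss} is an instance of the finite-sum inclusion \eqref{eq-finitesum-G} studied in Section \ref{sec:finitesum}, and then to invoke Corollary \ref{corollary-epsilon-E} and Corollary \ref{corollary-large-E} essentially verbatim; the whole argument runs parallel to that of Corollary \ref{corollary-WDRSL}. Concretely, I would take $E$ to be the normal-cone operator of the closed convex set $\mathcal{X}\times\mathcal{Y}$ and $F_i(z):=(\nabla_x f_i(x,y);-\nabla_y f_i(x,y))$ with $f_i(x,y):=\ell(x,\xi_i)$, as in \eqref{eq-minmax-wdro}. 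Since $\mathcal{X}$ constrains only the $x$-block and $\mathcal{Y}$ only the $y$-block, the resolvent $J_{\alpha E}$ is the Euclidean projection onto $\mathcal{X}\times\mathcal{Y}$, which splits as $\big(\proj{\mathcal{X}}{\cdot},\proj{\mathcal{Y}}{\cdot}\big)$; substituting this into \eqref{eq-finitesum-G} reproduces exactly the mapping $G$ in \eqref{eq-G-WDRO-cvx-loss}, and \eqref{eq-isHalpern} applied to \eqref{eq-G-WDRO-cvx-loss} is precisely the scheme analyzed in Section \ref{sec:finitesum}.

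The second step is to verify the co-coercivity hypotheses. Because $\ell$ is $L_0$-smooth and convex-concave and $f_i$ depends on $(x,y)$ only through $(x,\xi_i)$, each $f_i$ is convex-concave on $\bb{R}^n\times\bb{R}^{Nd}$ with $L_0$-Lipschitz gradient (the extra $y$-blocks of $\nabla f_i$ vanish); hence Lemma \ref{lemma:cvxcoco} yields that each $F_i$ is $\frac{1}{L_0}$-co-coercive, exactly as in Section \ref{sec:WDRSL}. Co-coercivity with a common modulus is preserved under the convex combination $F=\frac1N\sum_{i=1}^N F_i$, so $F$ is $\frac{1}{L_0}$-co-coercive, while $E$ is maximal monotone; thus Lemma \ref{lemma:G-coco} gives that $G$ is $L$-co-coercive with $L=\alpha(4-\alpha L_0)/4$ for any $\alpha\in(0,4/L_0)$. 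Here I would also note, in one sentence, that $\mathcal{Y}$ is closed and convex (so $\proj{\mathcal{Y}}{\cdot}$ is well defined and $f_i$ is genuinely concave in $y$ over $\mathcal{Y}$), which is where the Polish-space assumption and Lemma \ref{lemma-polish-dual} enter, and that the standing solvability assumption for \eqref{eq-inclusion-G}—guaranteed here by compactness of $\mathcal{X}$ and the reformulation—supplies the $z^*$ appearing in the bounds.

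With these facts established, Theorem \ref{thm-rate-ishi} is in force, and under Assumption \ref{assumption-bounded-var} the PAGE construction of Lemma \ref{lemma-PAGE} produces estimators $\widetilde F(z^k)$ meeting $\bb{E}\big[\norm{F(z^k)-\widetilde F(z^k)}^2\big]\le\tfrac12\sigma_k^2$ with the prescribed $p_k$, $N^{(1)}_k$, $N^{(2)}_k$. Feeding the two regimes in the hypothesis—case (1) $\sigma_k=\gamma_k=\epsilon/\sqrt{k+1}$, and case (2) $\sigma_k=\gamma_k=(k+1)^{-a}$ with $a>3/2$—into Corollary \ref{corollary-epsilon-E} and into Corollary \ref{corollary-large-E} (the latter with $\delta=\epsilon$) yields the $O(\epsilon^2)$ bounds on $\bb{E}\big[\norm{G(z^K)}^2\big]$ and $\bb{E}\big[\norm{z^{K+1}-z^K}^2\big]$ after $K=O(\epsilon^{-1})$ iterations, and the claimed $O(\epsilon^{-3})$, resp.\ $O(\epsilon^{-2a})$, expected sample complexities.

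I expect the only real work to be bookkeeping rather than a new idea. The two points that need care are: (i) justifying that embedding $\ell(x,\cdot)$ into the product space $\bb{R}^{Nd}$ leaves the Lipschitz modulus, and hence the co-coercivity modulus, equal to $1/L_0$—both sides of the co-coercivity inequality for $F_i$ are unchanged because the non-$\xi_i$ blocks of $F_i$ are identically zero; and (ii) the fact that the PAGE sample counts are themselves random, since $N^{(2)}_k$ scales with $\norm{z^k-z^{k-1}}^2$, so the total sample complexity must be controlled in expectation by summing $\bb{E}[N^{(1)}_k]$ and $\bb{E}[N^{(2)}_k]$ over $k=0,\dots,K$ and using the $O\big(1/((k+1)(k+2))\big)$ decay of $\bb{E}\big[\norm{z^{k+1}-z^k}^2\big]$. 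This last computation is exactly the one already carried out for Corollary \ref{corollary-epsilon-E} and Corollary \ref{corollary-large-E}, so it transfers without change; the proof therefore reduces to checking that the WDRO-with-convex-concave-loss instance satisfies their hypotheses.
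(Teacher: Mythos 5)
Your proposal is correct and follows essentially the same route as the paper: the paper itself proves this corollary only by noting that \eqref{eq-G-WDRO-cvx-loss} is an instance of \eqref{eq-finitesum-G} with $E$ the normal cone of $\mathcal{X}\times\mathcal{Y}$ and $F_i$ the saddle operator of $f_i(x,y)=\ell(x,\xi_i)$ (co-coercive via Lemma \ref{lemma:cvxcoco}, exactly as in Section \ref{sec:WDRSL}), and then invoking Corollary \ref{corollary-epsilon-E} and Corollary \ref{corollary-large-E}. Your write-up in fact supplies more detail than the paper does (the block splitting of $J_{\alpha E}$ into the two projections, and the observation that the PAGE sample counts are random and must be bounded in expectation), but the underlying argument is identical.
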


\section{Numerical experiments} \label{sec:numerical}

In this section, we conduct preliminary numerical experiments to validate the theoretical development of the proposed inexact Halpern iterations. To this end, we solve the 2-Wasserstein DRO problems \eqref{eq-wdro} with the convex-concave loss that can be either quadratic-linear or nonlinear. The code for these experiments is available at:
\begin{center}
    \url{https://github.com/liangling98/isHalpern/tree/main}
\end{center}  

\subsection{2-Wasserstein DRO problems with quadratic-linear loss}
We first solve the DRO problem with a quadratic-linear loss function that is given as
\[
    \ell(x,\xi):= \frac{1}{2}\norm{Ax - \xi}^2 - \frac{1}{2}\|\xi\|^2,\quad \forall\; x \in \mathcal{X},\; \xi \in \Xi,
\]
where $A\in \mathbb{R}^{d\times n}$ is given with $d \leq n$, $\mathcal{X}:=\{ x \in \mathbb{R}^n \;:\; e^T x = 1,\; \texttt{lb} \leq x \leq \texttt{ub}\}$ with given lower and upper bounds satisfying $-\infty \leq \texttt{lb} \leq \texttt{ub} \leq \infty$, and $\Xi:= \mathbb{R}^d$ with a given parameter $\rho > 0$. Note that 
\[
    \nabla_x \ell(x,\xi) = A^T(Ax - \xi),\quad \nabla_\xi \ell(x,\xi) =  - Ax.
\]
One can verify that $\ell$ is $L_0$-smooth, where $L_0 = \norm{[A^TA, -A^T; -A, 0]}_2$.

According to \eqref{eq-minmax-wdro}, the resulted DRO can be reformulated as the min-max optimization problem:
\[
    \min_{x\in \mathcal{X}}\; \max_{\xi\in \mathcal{Y}} \quad \frac{1}{2N}\sum_{i=1}^N \left(\norm{Ax - \xi_i}^2 - \|\xi_i\|^2\right), 
\]
where 
\[
\mathcal{Y}:=\left\{y:=(\xi_1,\dots,\xi_N)^T\in \mathbb{R}^{N d}\;:\; \norm{y - \hat{y}} \leq \sqrt{N}\theta\right\}, \quad \hat{y}:=(\hat{\xi}_1,\dots,\hat{\xi}_N)^T.
\]
This problem can be further modeled as an inclusion problem $0\in G(x,y)$ as in \eqref{eq-G-WDRO-cvx-loss}, where 
\[
    G(x,y):= \frac{1}{\alpha} \begin{pmatrix}
		\displaystyle x - \proj{\mathcal{X}}{x - \alpha A^TAx + \frac{\alpha}{N}\mathcal{A}^Ty} \\
		\displaystyle y - \proj{\mathcal{Y}}{y -\frac{\alpha}{N} \mathcal{A}x} \\
	\end{pmatrix}, \quad \forall\; (x,y)\in \mathbb{R}^n\times \mathbb{R}^{Nd},
\]
and 
\[
\alpha \in \left(0, \frac{4}{L_0}\right), \quad  \mathcal{A}:= \begin{pmatrix}
	    A \\ \vdots \\ A
	\end{pmatrix} \in \mathbb{R}^{Nd\times n}.
\]
From Lemma \ref{lemma:G-coco}, we see that $G$ is $\frac{\alpha(4-\alpha L_0)}{4}$-co-coercive. In this section, we always set $\alpha = 2/L_0$ for simplicity. 

Next, we provide a detailed description of the experimental settings used in our numerical tests. 

\textbf{Data set.} Given $d$ and $n$, we generate a random matrix $A\in \mathbb{R}^{d\times n}$ with entries sampled from the uniform distribution (i.e., $A = \texttt{rand}(d,n))$. The columns of $A$ are then normalized to have unit Euclidean norms. Next, we construct a random vector $\tilde{x}\in \mathbb{R}^n$, also drawn from the uniform distribution, and normalize it so that its elements sum to one. Using $\tilde{x}$, we define the lower and upper bounds as:
\[
    \texttt{lb} = \tilde{x} - \frac{1}{4}\texttt{rand}(n),\quad  \texttt{ub} = \tilde{x} + \frac{1}{4}\texttt{rand}(n).
\]
This guarantees that the set $\mathcal{X}$ is nonempty. For the training data set, we generate $\hat{y} = \texttt{rand}(Nd)$ with $N = 200d$, and set the radius of the Wasserstein ball to $\theta = 10^{-2}$. Finally, we terminate the algorithm when the total number of gradient queries to $\ell$ reaches $50N$.

\textbf{Baseline solver.} We evaluate the performance of the following four algorithms: the exact Halpern iteration (HI), the deterministic inexact Halpern iteration (iHI) with approximate evaluations of $G$, the inexact stochastic Halpern iteration (isHI) with samples sizes $N^{(1)}_k = N^{0.7}$ and $N^{(1)}_k = N^{0.3}$ for all $k\geq 0$, and the classical projected gradient descent-ascent method (PGDA). Based on our numerical experience, we observed that the extragradient method \cite{cheney1959proximity} and Popov's method \cite{popov1980modification} exhibit performance comparable to PGDA. Therefore, for simplicity, we present only the results for PGDA. 

\textbf{Projections onto $\mathcal{X}$ and $\mathcal{Y}$.} While the projection onto $\mathcal{Y}$ can be evaluated exactly, the projection onto $\mathcal{X}$ requires an iterative solver for general $\texttt{lb}$ and $\texttt{ub}$ \cite{liang2024vertex}. In our experiments, we use the alternating projection algorithm \cite{cheney1959proximity} to approximate the projection of a given point onto $\mathcal{X}$. The algorithm is terminated when the following conditions are satisfied for a specified tolerance $\texttt{tol} \geq 0$:
\[
    |e^Tx - 1|< \texttt{tol}, \quad x > \texttt{lb}-\texttt{tol}, \quad x < \texttt{ub} + \texttt{tol}.
\]
We set $\texttt{tol} = 10^{-12}$ to compute a nearly exact projection for the HI and PGDA methods. For the proposed inexact Halpern iterations, we use $\texttt{tol} = 5\times 10^{-2} / \sqrt{k+1}$ where $k$ denotes the iteration counter, to establish an inexact projection setting.

The computational results for various values of $d$ and $n$ are presented in Figures \ref{fig:1}–\ref{fig:8}. The legends of these figures also report the average number of iterations taken by the alternating projection method for four solvers.

First, we observe that by leveraging the relaxed inexactness conditions established in this paper, iHI achieves performance comparable to that of HI while requiring significantly less effort to approximate the projection onto $\mathcal{X}$. This finding validates both the correctness and the effectiveness of our theoretical results. Second, when comparing the performance of PGDA and HI, we note that PGDA typically exhibits a faster convergence rate during the initial iterations but slows down in later stages. This behavior suggests a practical strategy: running PGDA for a few iterations to generate a good-quality initial point for Halpern iterations can enhance overall efficiency. Lastly, for large $N$, isHI demonstrates clear efficiency gains, aligning with the prevalent use of stochastic algorithms in machine learning tasks.

Overall, our numerical results strongly support the motivation for considering inexact and/or stochastic Halpern iterations with relaxed inexactness conditions and variance reduction techniques. These findings further inspire potential advanced applications in related fields.

\begin{figure}[!]
    \centering

    \begin{subfigure}[b]{0.45\textwidth}
        \centering
        \includegraphics[width=\textwidth]{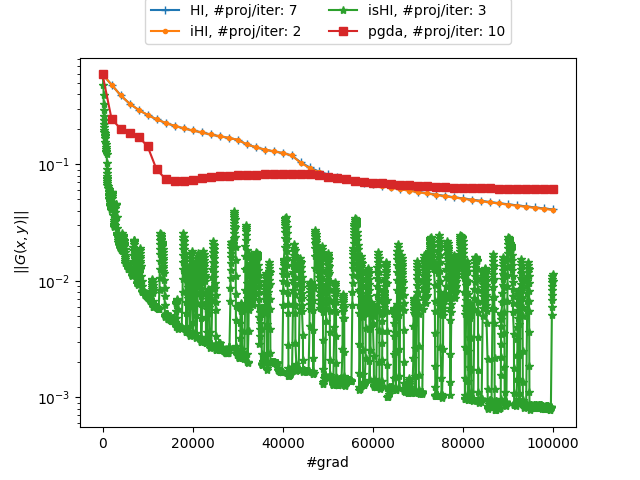}
        \caption{$d = 10$, $n = 10$}
        \label{fig:1}
    \end{subfigure}
    \hfill
    \begin{subfigure}[b]{0.45\textwidth}
        \centering
        \includegraphics[width=\textwidth]{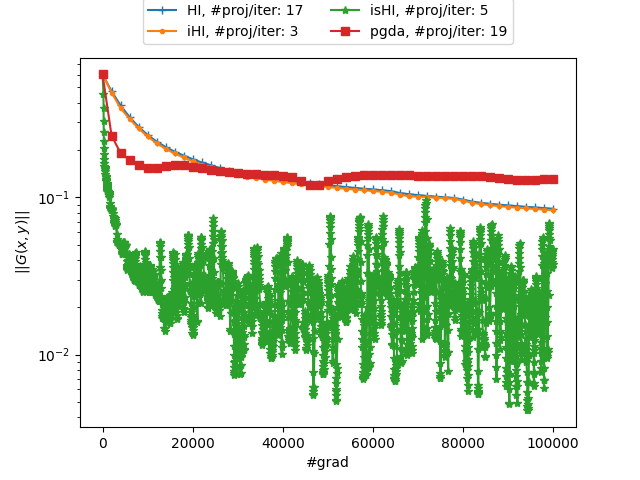}
        \caption{$d = 10$, $n = 20$}
        \label{fig:2}
    \end{subfigure}

    \vspace{0.5em}

    \begin{subfigure}[b]{0.45\textwidth}
        \centering
        \includegraphics[width=\textwidth]{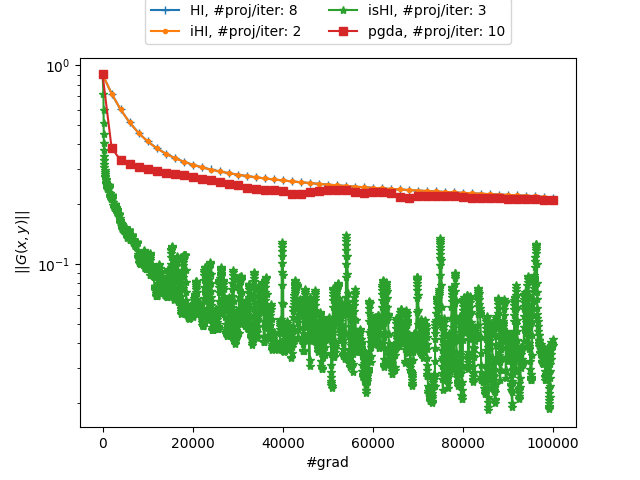}
        \caption{$d = 10$, $n = 50$}
        \label{fig:3}
    \end{subfigure}
    \hfill
    \begin{subfigure}[b]{0.45\textwidth}
        \centering
        \includegraphics[width=\textwidth]{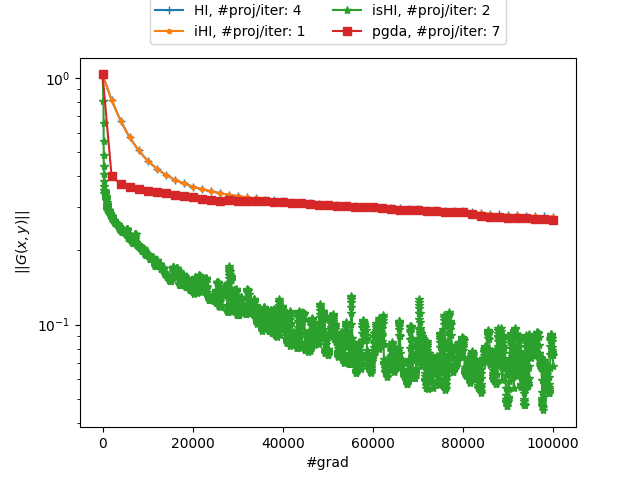}
        \caption{$d = 10$, $n = 100$}
        \label{fig:4}
    \end{subfigure}

    \vspace{0.5em}

    \begin{subfigure}[b]{0.45\textwidth}
        \centering
        \includegraphics[width=\textwidth]{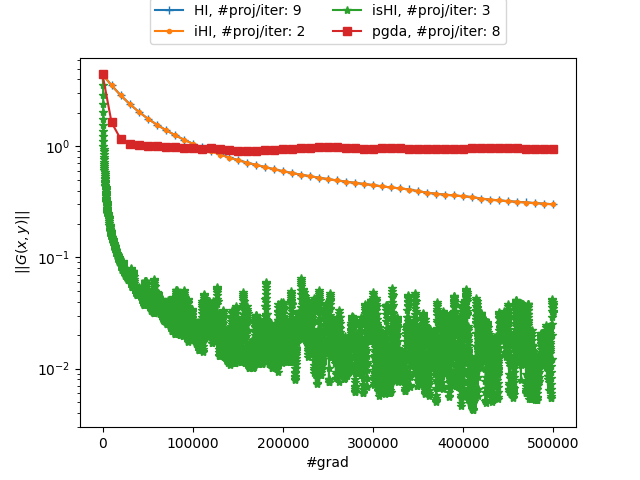}
        \caption{$d = 50$, $n = 50$}
        \label{fig:5}
    \end{subfigure}
    \hfill
    \begin{subfigure}[b]{0.45\textwidth}
        \centering
        \includegraphics[width=\textwidth]{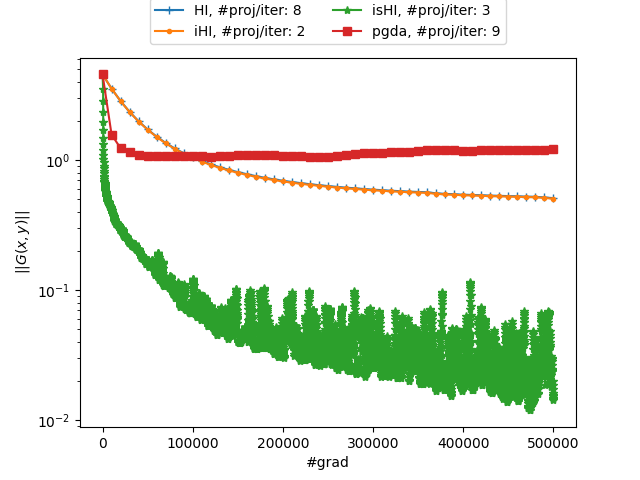}
        \caption{$d = 50$, $n = 100$}
        \label{fig:6}
    \end{subfigure}

    \vspace{0.5em}

    \begin{subfigure}[b]{0.45\textwidth}
        \centering
        \includegraphics[width=\textwidth]{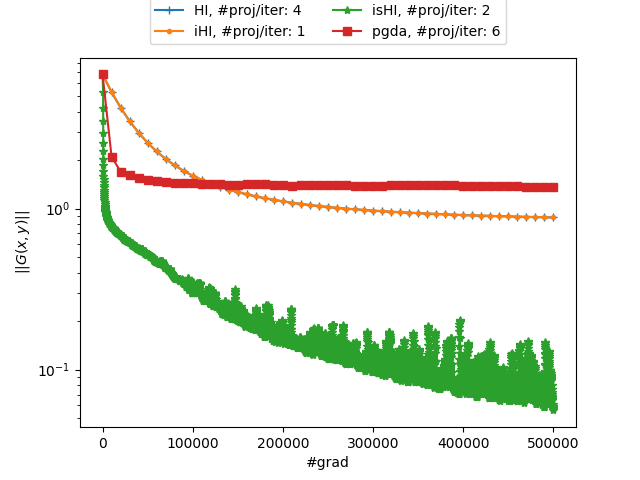}
        \caption{$d = 50$, $n = 250$}
        \label{fig:7}
    \end{subfigure}
    \hfill
    \begin{subfigure}[b]{0.45\textwidth}
        \centering
        \includegraphics[width=\textwidth]{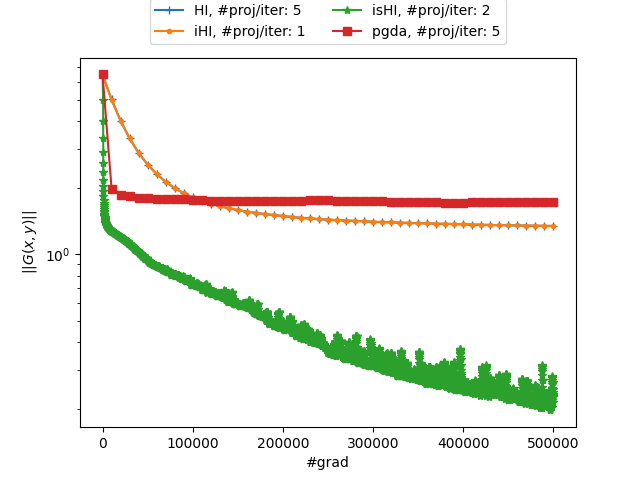}
        \caption{$d = 50$, $n = 500$}
        \label{fig:8}
    \end{subfigure}

    \caption{Results for different values of $d$ and $n$.}
    \label{fig:combined-2x4}
\end{figure}

\subsection{2-Wasserstein DRO problems with nonlinear convex-concave loss}
In this part, we utilize the inexact stochastic Halpern iteration to address a 2-Wasserstein Distributionally Robust Optimization problem featuring a nonlinear convex-concave loss function. The loss function is defined as
\[
\ell(x,\xi) := \frac{1}{2} x^{T}x - x^{T}\xi - \frac{1}{2} \exp\left(-\xi^{T}\xi\right) - \frac{1}{2} \xi^{T}\xi, \quad x \in \mathcal{X},\; \xi \in \Xi,
\]
where $\mathcal{X} := \mathbb{R}^n$ and $\Xi := \mathbb{R}^n$. The DRO problem is formulated as
\begin{equation}\label{problem:NDRO}
\min_{x\in\mathcal{X}} \sup_{\mu\in \bb{B}_\theta(\hat{\mu}_N)}\; \bb{E}_{\xi\sim \mu}\left[\ell(x,\xi)\right],
\end{equation}
with a given radius $\theta > 0$ for the Wasserstein ball $\bb{B}_\theta(\hat{\mu}_N)$ centered at the empirical distribution $\hat{\mu}_N$ derived from $N$ samples $\hat{\xi}_1, \dots, \hat{\xi}_N$.

As detailed in Section \ref{sec:wdro-cc}, the problem \eqref{problem:NDRO} can be reformulated as the min-max optimization problem:
\begin{equation}\label{problem:SAA NDRO}
\min_{x\in\mathcal{X}} \sup_{y\in\mathcal{Y}_{\hat{y}}^\theta}\frac{1}{N}\sum_{i=1}^N \ell(x,\xi_i),
\end{equation}
where $y:=(\xi_1,\dots,\xi_N)^T \in \mathbb{R}^{Nn}$, $\hat{y}:=(\hat{\xi}_1,\dots,\hat{\xi}_N)^T \in \mathbb{R}^{N*n}$, and the feasible set for $y$ is
\[
\mathcal{Y}_{\hat{y}}^\theta := \left\{ y \in \mathbb{R}^{Nn} \;:\; \norm{y - \hat{y}}^2 \leq N\theta^2 \right\}.
\]
Let $L(x,y) = \frac{1}{N}\sum_{i=1}^N \ell(x,\xi_i)$.

A key difficulty in applying the standard dual reformulation approach to WDRO is that it requires computing the convex conjugate of $-\ell(x,\xi)$ with respect to $\xi$ \cite{mohajerin2018data}. Since no closed‐form expression for its conjugate is available, the dual method cannot be implemented in this case. Fortunately, $L(x,y)$ meets all the prerequisites for employing Halpern iteration schemes. It is convex in $x$, as its second partial derivative with respect to $x$ is $\frac{\partial ^2 L}{\partial x^2}(x,y) = \frac{1}{N}\sum \frac{\partial ^2 l}{\partial x^2}(x,\xi_i) = I_n$, where $I_n$ is the $n \times n$ identity matrix. For the $y$ variable, $\frac{\partial ^2 L}{\partial y^2}(x,y) = \text{diag}(\frac{\partial ^2 \ell}{\partial \xi^2}(x,\xi_1), \frac{\partial ^2 \ell}{\partial \xi^2}(x,\xi_2), ..., \frac{\partial ^2 \ell}{\partial \xi^2}(x,\xi_N))$. A simple computation shows that $\frac{\partial ^2 \ell}{\partial \xi^2}(x,\xi_i) = (I_n - \xi_i \xi_i^T)\exp(-\xi_i^T \xi_i) - I_n$. Since $\exp(-\xi_i^T \xi_i) \in (0, 1]$ for all $\xi_i \in \mathbb{R}^n$, $\exp(-\xi_i^T \xi_i) I_n - I_n$ is negative semi-definite. And $ \xi_i \xi_i^T$ is always positive semi-definite, therefore $\frac{\partial ^2 \ell}{\partial \xi^2}(x,\xi_i) \preceq 0, i = 1, ..., N$, consequently, $L(x,y)$ is concave in $y$. Because the spectral norm of the Hessian matrix of $\ell(x,\xi)$ is less than 2 for all $(x,\xi)$, its gradient $\nabla \ell$ is $1/2$-co-coercive. Hence $L$ is $1/2$-co-coercive as well. $L(x,y)$ also satisfies Assumption \ref{assumption-bounded-var}. Consequently, all conditions for the application of Halpern iterations are met. 

Next, we detail the experimental setup for our numerical experiments. 

\textbf{Data Generation.}
We selected the parameters $n=3$, $N=100$, and $\theta = 0.1$. The $N$ empirical samples $\hat{\xi}_i \in \mathbb{R}^n$ comprising $\hat{y}$ were drawn i.i.d. from a standard normal distribution $\mathcal{N}(0,I_3)$. The initial primal variable $x_0$ was generated by sampling uniformly from $[0,1]^n$. The initial distribution was represented by the original sampled points.

\textbf{Algorithm Settings.}
We implemented isHI with a step size parameter $\alpha=1$. The stochastic gradients were computed using the PAGE variance-reduced estimator, configured with parameters $\epsilon = 0.01$, $a = 2$, and $\sigma = 1$. The operator $G(x,y)$ for the inclusion problem was formulated based on \eqref{eq-G-WDRO-cvx-loss}. The iterative process was terminated when the norm of the gradient mapping, $\norm{G(x^k,y^k)}$, fell below a threshold of $5 \times 10^{-3}$.

Figure \ref{fig:NDRO_loss} presents the evolution of the test loss $L(x^k,y^*)$, where $y^*=\{\xi^*_1, ...,\xi^*_N\}$ denotes the worst-case perturbed samples. $y^*$ are obtained by solving the inner maximization problem in \eqref{problem:SAA NDRO} for the DRO decision $x^*$, which is the final iterate $x^K$.
As depicted in Figure \ref{fig:NDRO_loss}, the test loss exhibits a rapid decrease, underscoring the effectiveness of the isHI method for this class of nonlinear DRO problems.

To provide insight into how the distribution shifts from the initial empirical samples towards the worst-case configuration, Figure \ref{fig:NDRO_sample} shows the trajectories of the sample points $\xi_i$. It is observable that all sample points migrate closer to the origin. Notably, points that were initially more distant from the origin exhibit larger displacements inward. This observed behavior is consistent with the structure of $\ell(x,\xi)$, where deviations towards the origin can be more impactful due to the interplay between the quadratic and exponential terms involving $\xi$. 

In summary, the results demonstrate that the inexact stochastic Halpern iteration can efficiently solve DRO problems with convex-concave loss function even when dual methods are inapplicable. These insights suggest promising extensions to broader classes of robust optimization problems.

\begin{figure}[htb!]
    \centering

    \begin{subfigure}[t]{0.48\textwidth}
        \centering
        \includegraphics[width=0.9\linewidth]{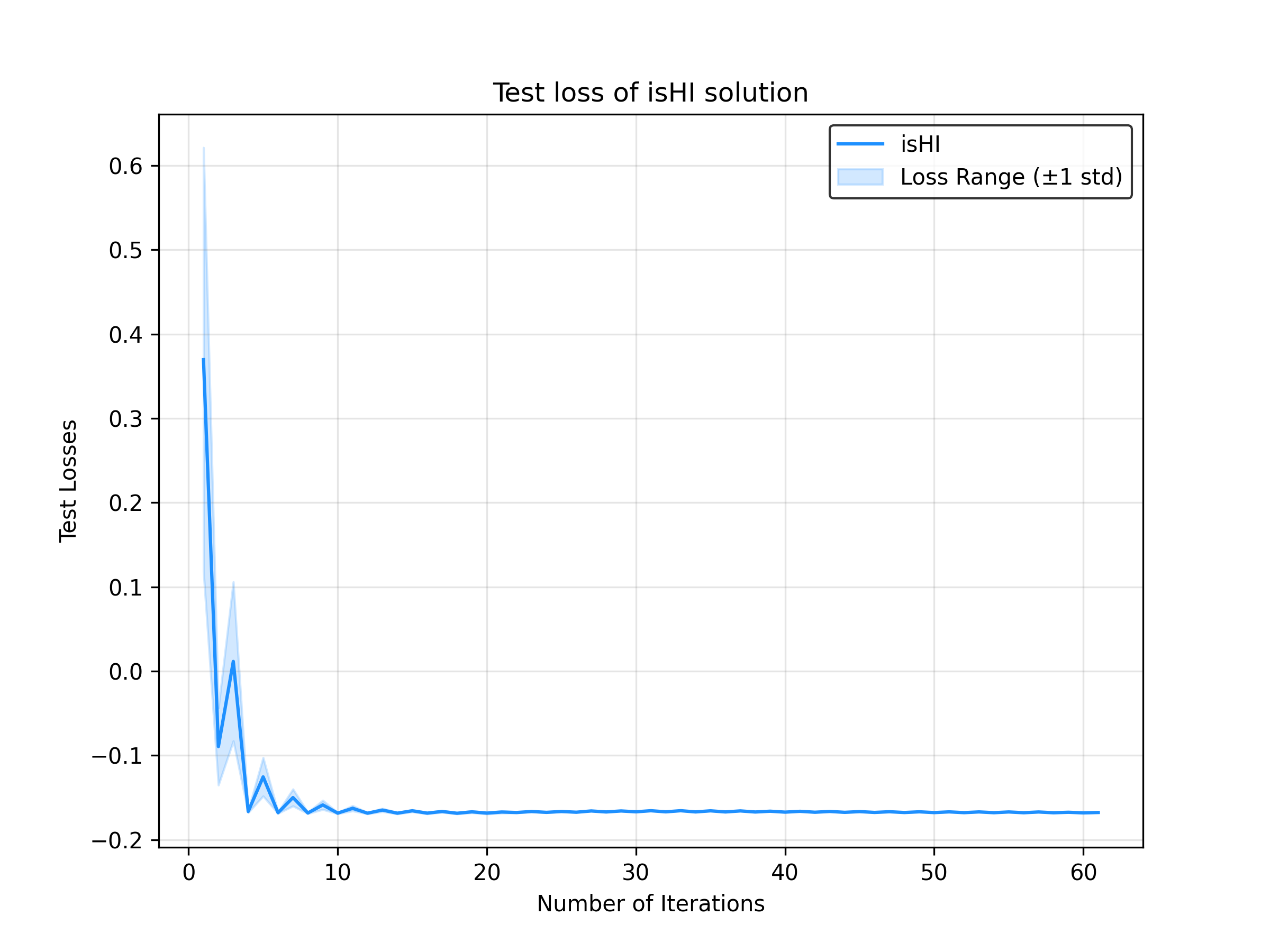}
        \caption{The blue curve represents the mean test losses over 10 runs, with the shaded region showing $\pm 1$ standard deviation.}
        \label{fig:NDRO_loss}
    \end{subfigure}
    \hfill
    \begin{subfigure}[t]{0.48\textwidth}
        \centering
        \includegraphics[width=0.9\linewidth]{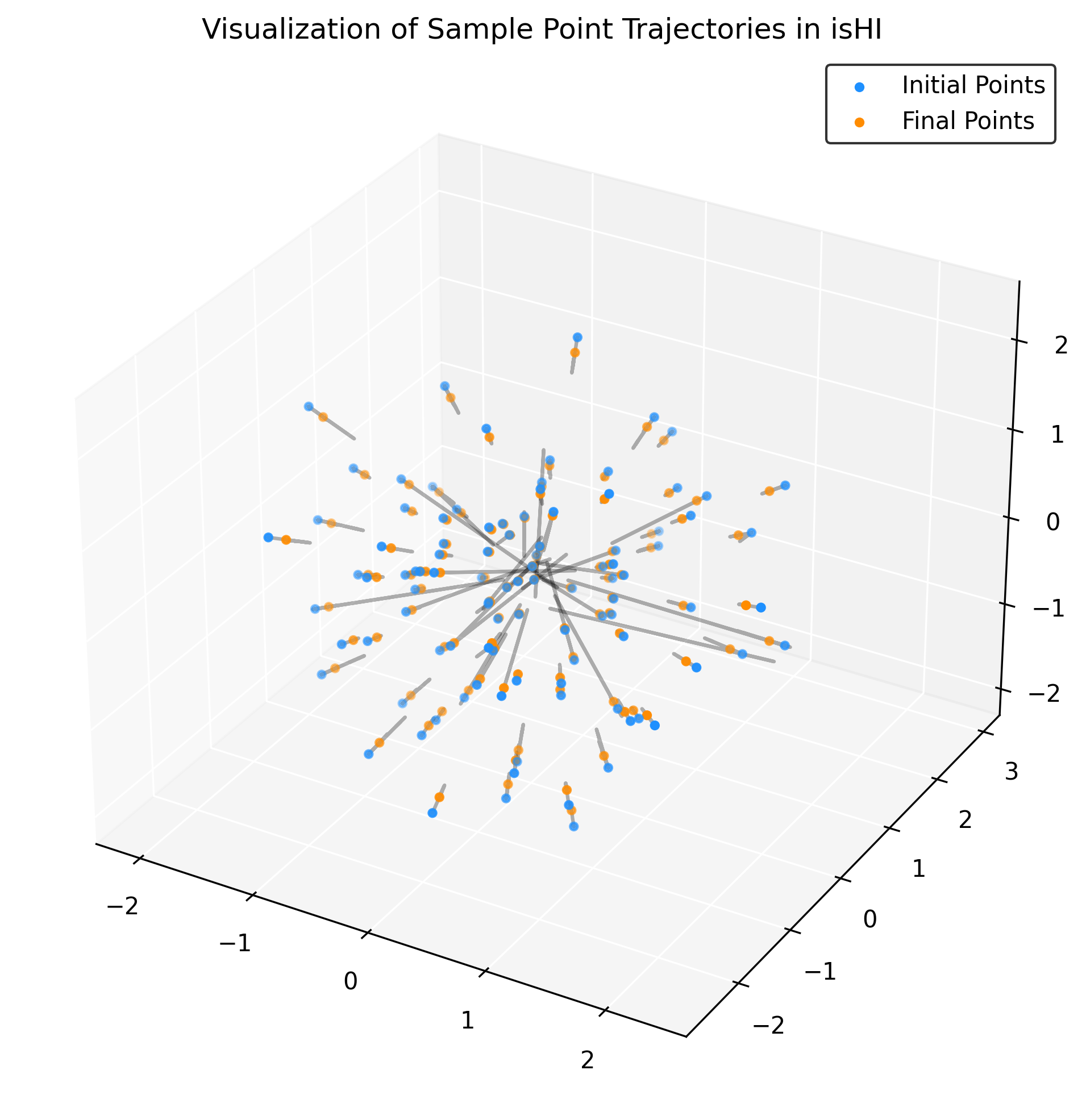}
        \caption{The blue points represent the initial empirical samples. The orange points represent the worst-case perturbed points upon termination. The gray lines depict the trajectories of the sample points under the isHI updates.}
        \label{fig:NDRO_sample}
    \end{subfigure}

    \caption{Visualization of test loss dynamics and sample perturbation under the isHI updates.}
    \label{fig:NDRO_combined}
\end{figure}

\section{Conclusions}
\label{sec:conclusions}
We have analyzed two inexact versions of the classical Halpern fixed-point iterative scheme via conducting comprehensive convergence analysis. In particular, we have established the $O(k^{-1})$ convergence rates in terms of the residue norm and expected residue norm in deterministic and stochastic settings, respectively. We adapt the proposed methods for solving two important classes of data-driven Wasserstein distributionally robust optimization (WDRO) problems that can be reformulated as convex-concave min-max optimization problems. However, in many real world applications, the WDRO may not guarantee convex-concave min-max or convex optimization reformulations. In this case, the Halpern iteration and other related algorithms designed for convex and/or convex-concave min-max optimization are no longer applicable.

\section*{Acknowledgement}

The research of Jia-Jie Zhu received funding support from the Deutsche Forschungsgemeinschaf (DFG, German Research Foundation) as part of the priority programme ``Theoretical foundations of deep learning" (project number: 543963649). We thank the editor and the anonymous reviewers for providing  valuable suggestions, which have helped to improve the quality of the paper.

\bibliographystyle{spmpsci}
\bibliography{references}  

\appendix

\section{Technical lemmas and proofs.}

We first show in the following lemma some useful expressions for the difference $z^{k+1} - z^k$ in the Halpern iteration \eqref{eq-inexact-halpern}.
\begin{lemma}
    \label{lemma-zdiff}
    Let $\{z^k\}$ be generated by \eqref{eq-inexact-halpern} with $\eta_k = (1-\beta_k)/L$, then the following expressions for $z^{k+1} - z^k$ for $k\geq 0$ hold:
    \begin{enumerate}
        \item $z^{k+1} - z^k = \beta_k(z^0 - z^k) - \frac{1-\beta_k}{L}\tilde{z}^k$;
        \item $z^{k+1} - z^k = \frac{\beta_k}{1-\beta_k}(z^0 - z^{k+1}) - \frac{1}{L}\tilde{z}^k$;
        \item $z^{k+1} - z^k = -\frac{1-\beta_k}{L}\tilde{z}^k + \sum_{i = 0}^{k-1}\frac{\beta_k}{L}\left( \prod_{j = i}^{k-1} (1-\beta_j)\right) \tilde{z}^i$, for $k\geq 0$. Moreover, if $\beta_k:= 1/(k+2)$, it holds that 
    \begin{equation}
        \label{eq-lemma-zdiff-1}
        \begin{aligned}
            &\; \norm{z^{k+1} - z^k}^2 \\
            \leq &\; \left\{
            \begin{array}{ll}
                \frac{1}{4L^2} \norm{\tilde{z}^0}^2, & \textrm{if } k = 0,\\[5pt]
                \frac{2(k+1)^2}{L^2(k+2)^2}\norm{\tilde{z}^k}^2 + \frac{2k}{L^2(k+1)^2(k+2)^2}\sum_{i = 0}^{k-1}(i+1)^2\norm{\tilde{z}^i}^2,    & \textrm{if } k\geq 1. 
            \end{array}
        \right.
        \end{aligned}
    \end{equation}
    \end{enumerate}
\end{lemma}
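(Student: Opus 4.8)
The plan is to proceed by direct algebraic manipulation of the recursion \eqref{eq-inexact-halpern}, establishing the three identities in order, and then specializing to $\beta_k = 1/(k+2)$ for the final bound. For item~1, I would simply subtract $z^k$ from both sides of $z^{k+1} = \beta_k z^0 + (1-\beta_k)z^k - \eta_k\tilde{z}^k$ and substitute $\eta_k = (1-\beta_k)/L$; since $(1-\beta_k)z^k - z^k = -\beta_k z^k$, the terms reorganize immediately into $\beta_k(z^0 - z^k) - \frac{1-\beta_k}{L}\tilde{z}^k$. For item~2, I would instead solve the recursion for $z^k$, namely $z^k = \frac{1}{1-\beta_k}(z^{k+1} - \beta_k z^0) + \frac{1}{L}\tilde{z}^k$, substitute this into $z^{k+1} - z^k$, and collect the coefficients of $z^{k+1}$ and $z^0$ using $1 - \frac{1}{1-\beta_k} = -\frac{\beta_k}{1-\beta_k}$.

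For item~3, the key observation is that $u^k := z^k - z^0$ satisfies the clean recursion $u^{k+1} = (1-\beta_k)u^k - \frac{1-\beta_k}{L}\tilde{z}^k$ with $u^0 = 0$, obtained by subtracting $z^0$ from the iteration. Unrolling this yields $u^k = -\frac{1}{L}\sum_{i=0}^{k-1}\bigl(\prod_{j=i}^{k-1}(1-\beta_j)\bigr)\tilde{z}^i$, which I would record via a short induction on $k$. Plugging this into the identity from item~1, rewritten as $z^{k+1} - z^k = -\beta_k u^k - \frac{1-\beta_k}{L}\tilde{z}^k$, gives the stated formula (with the empty sum when $k=0$ recovering $z^1 - z^0 = -\frac{1-\beta_0}{L}\tilde{z}^0$).

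Finally, for the bound \eqref{eq-lemma-zdiff-1} with $\beta_k = 1/(k+2)$, I would use that $1-\beta_j = (j+1)/(j+2)$ telescopes to $\prod_{j=i}^{k-1}(1-\beta_j) = (i+1)/(k+1)$, so item~3 reads $z^{k+1} - z^k = -\frac{k+1}{L(k+2)}\tilde{z}^k + \frac{1}{L(k+1)(k+2)}\sum_{i=0}^{k-1}(i+1)\tilde{z}^i$. The case $k=0$ is immediate since $1-\beta_0 = 1/2$ and the sum is empty. For $k\geq 1$, applying $\|a+b\|^2 \leq 2\|a\|^2 + 2\|b\|^2$ to split off the $\tilde z^k$ term, followed by the triangle inequality and Cauchy--Schwarz in the form $\bigl\|\sum_{i=0}^{k-1}(i+1)\tilde{z}^i\bigr\|^2 \leq k\sum_{i=0}^{k-1}(i+1)^2\|\tilde{z}^i\|^2$, produces exactly the claimed inequality. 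There is no genuine obstacle here; the only point requiring care is the bookkeeping of the telescoping product and the index ranges in the unrolled recursion, which is precisely why I would isolate the induction for $u^k$ as a separate step rather than inlining it.
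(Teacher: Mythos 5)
Your proposal is correct and follows essentially the same route as the paper: items 1 and 2 by direct algebra, item 3 by unrolling the recursion inductively, and the bound via the telescoping product $\prod_{j=i}^{k-1}(1-\beta_j)=\frac{i+1}{k+1}$ followed by the two standard inequalities $\|a+b\|^2\le 2\|a\|^2+2\|b\|^2$ and $\bigl\|\sum_{i=0}^{k-1}(i+1)\tilde{z}^i\bigr\|^2\le k\sum_{i=0}^{k-1}(i+1)^2\|\tilde{z}^i\|^2$. The only (harmless) difference is in item 3: you induct on the auxiliary sequence $u^k=z^k-z^0$, which satisfies $u^{k+1}=(1-\beta_k)u^k-\frac{1-\beta_k}{L}\tilde{z}^k$, and then substitute into item 1, whereas the paper inducts directly on the formula for $z^{k+1}-z^k$, invoking both items 1 and 2 in the inductive step; your variant is equivalent and arguably a bit cleaner since the unrolling is decoupled from item 2.
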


\begin{proof}
    The first statement of the lemma is obvious. For the second statement, we first note that 
    \[
        z^0 - z^{k+1} = (1 - \beta_k)(z^0 - z^k) + \frac{1-\beta_k}{L}\tilde{z}^k.
    \]
    Hence, it holds that
    \[
        z^0 - z^k = \frac{1}{1 - \beta_k} (z^0 - z^{k+1}) - \frac{1}{L} \tilde{z}^k,
    \]
    which together with part 1 further implies that 
    \[
        z^{k+1} - z^k = \beta_k(z^0 - z^k) - \frac{1-\beta_k}{L}\tilde{z}^k = \frac{\beta_k}{1 - \beta_k} (z^0 - z^{k+1}) - \frac{1}{L} \tilde{z}^k.
    \]
    Therefore, the second statement of the lemma holds true. We next show the third statement via mathematical induction. For $k = 0$, we see from the first statement of the lemma that 
    \[
        z^1 - z^0 = -\frac{{1-\beta_0}}{L}\tilde{z}^0,
    \]
    which implies that the third statement of the lemma holds for $k = 0$. Now suppose that the third statement of the lemma holds for $k-1$, i.e., 
    \[
        z^{k} - z^{k-1} = -\frac{1-\beta_{k-1}}{L}\tilde{z}^{k-1} + \sum_{i = 0}^{k-2}\frac{\beta_{k-1}}{L}\left( \prod_{j = i}^{k-2} (1-\beta_j)\right) \tilde{z}^i.
    \]
    Then, we can see that
        \begin{align*}
             &\; z^{k+1} - z^k \\
            = &\;  - \frac{1-\beta_k}{L}\tilde{z}^k + \beta_k(z^0 - z^k) \\
            = &\; - \frac{1-\beta_k}{L}\tilde{z}^k + \beta_k\left( \frac{1-\beta_{k-1}}{\beta_{k-1}}(z^k - z^{k-1}) + \frac{1-\beta_{k-1}}{L\beta_{k-1}}\tilde{z}^{k-1}  \right) \\
             = &\; - \frac{1-\beta_k}{L}\tilde{z}^k +  \frac{\beta_k(1-\beta_{k-1})}{\beta_{k-1}}\left( -\frac{1-\beta_{k-1}}{L}\tilde{z}^{k-1} + \sum_{i = 0}^{k-2}\frac{\beta_{k-1}}{L}\left( \prod_{j = i}^{k-2} (1-\beta_j)\right) \tilde{z}^i\right) \\
             &\; + \frac{\beta_k(1-\beta_{k-1})}{L\beta_{k-1}}\tilde{z}^{k-1}  \\
             =&\; - \frac{1-\beta_k}{L}\tilde{z}^k + \sum_{i = 0}^{k-2}\frac{\beta_k(1-\beta_{k-1})}{L}\left( \prod_{j = i}^{k-2} (1-\beta_j)\right) \tilde{z}^i + \frac{\beta_{k}(1 - \beta_{k-1})}{L}\tilde{z}^{k-1} \\
             =&\; -\frac{1-\beta_k}{L}\tilde{z}^k + \sum_{i = 0}^{k-1}\frac{\beta_k}{L}\left( \prod_{j = i}^{k-1} (1-\beta_j)\right) \tilde{z}^i.
        \end{align*}
    Here, we use the first statement of the lemma in the first equality and use the second statement of the lemma in the second equality. Moreover, the third equality is due to the induction assumption and the remaining inequalities are derived from some direct simplifications. Therefore, the statement is true for $k$. By induction, we see that it is true for all $k\geq 0$.
    
    Finally, if $\beta_k = 1/(k+2)$, we have that $\beta_0 = 1/2$ and 
    \[
        z^1 - z^0 = \frac{1}{2L}\tilde{z}^0,
    \]
    which implies that 
    \[
        \norm{z^1 - z^0}^2 = {\frac{1}{4L^2}}\norm{\tilde{z}^0}^2.
    \]
    For $k\geq 1$, substituting $\beta_k = 1/(k+2)$ into the  expression of $z^{k+1} - z^k$ in the third statement yields that 
    \begin{align*}
        z^{k+1} - z^k = &\; -\frac{k+1}{L(k+2)}\tilde{z}^k + \sum_{i = 0}^{k-1}\frac{1}{L(k+2)}\left( \prod_{j = i}^{k-1} \frac{j+1}{j+2}\right) \tilde{z}^i \\
        =&\;  -\frac{k+1}{L(k+2)}\tilde{z}^k + \sum_{i = 0}^{k-1}\frac{i+1}{L(k+1)(k+2)}\tilde{z}^i.
    \end{align*}
    Therefore, by applying the Cauchy-Schwarz inequality twice, we see that 
        \begin{align*}
            \norm{z^{k+1} - z^k}^2 = &\; \norm{-\frac{k+1}{L(k+2)}\tilde{z}^k + \sum_{i = 0}^{k-1}\frac{i+1}{L(k+1)(k+2)}\tilde{z}^i}^2 \\
            \leq &\; \frac{2(k+1)^2}{L^2(k+2)^2}\norm{\tilde{z}^k}^2 + \frac{2}{L^2(k+1)^2(k+2)^2}\norm{\sum_{i = 0}^{k-1}(i+1)\tilde{z}^i}^2 \\
            \leq &\; \frac{2(k+1)^2}{L^2(k+2)^2}\norm{\tilde{z}^k}^2 + \frac{2k}{L^2(k+1)^2(k+2)^2}\sum_{i = 0}^{k-1}(i+1)^2\norm{\tilde{z}^i}^2,
        \end{align*}    
    which proves \eqref{eq-lemma-zdiff-1}. Hence, the proof is completed.
\end{proof}

Using the above lemma and the co-coerciveness of the mapping $G$, we can prove the following key estimate that is crucial for analyzing the rate of the convergence for \eqref{eq-inexact-halpern}. 
\begin{lemma}
    \label{lemma-key-estimate}
    Let $\{z^k\}$ be generated by \eqref{eq-inexact-halpern} with $\eta_k = (1-\beta_k)/L$, then for $k\geq 0$, it holds that 
    \begin{equation}
        \label{eq-lemma-key-estimate-1}
        \begin{aligned}
            &\; \frac{1}{2L}\norm{G(z^{k+1})}^2 - \frac{\beta_k}{1-\beta_k}\inprod{G(z^{k+1})}{z^0 - z^{k+1}} \\ \leq &\;   \frac{1 - 2\beta_k}{2L}\norm{G(z^k)}^2 - \beta_k\inprod{G(z^k)}{z^0 - z^k} + \frac{\beta_k}{L}\inprod{G(z^k)}{G(z^k) - \tilde{z}^k}\\
            &\; + \frac{1}{2L}\norm{G(z^k) - \tilde{z}^k}^2.
        \end{aligned}
    \end{equation}
    In particular, if $\beta_k:=1/(k+2)$, then for $k\geq 0$,
    \begin{equation}
        \label{eq-lemma-key-estimate-2}
        \mathcal{L}_{k+1} \leq \mathcal{L}_k  + \frac{1}{12L}\norm{G(z^k)}^2 + \frac{4(k+1)^2}{L}\norm{G(z^k) - \tilde{z}^k}^2,
    \end{equation}
    where $\mathcal{L}_k$ is the potential function that is defined as 
    \[
        \mathcal{L}_k:= \frac{k(k+1)}{2L}\norm{G(z^k)}^2 - (k+1)\inprod{G(z^k)}{z^0 - z^k},\quad \forall k\geq 0.
    \]
\end{lemma}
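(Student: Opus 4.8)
The plan is to establish the inequality \eqref{eq-lemma-key-estimate-1} first, and then specialize the parameters to derive \eqref{eq-lemma-key-estimate-2}. To get \eqref{eq-lemma-key-estimate-1}, I would start from the co-coerciveness of $G$: applied at the pair $(z^{k+1}, z^k)$, it gives
\[
\inprod{G(z^{k+1}) - G(z^k)}{z^{k+1} - z^k} \geq \frac{1}{L}\norm{G(z^{k+1}) - G(z^k)}^2.
\]
The key is to substitute the expressions for $z^{k+1}-z^k$ from Lemma~\ref{lemma-zdiff} (both the ``part 1'' form $\beta_k(z^0-z^k) - \frac{1-\beta_k}{L}\tilde z^k$ and the ``part 2'' form $\frac{\beta_k}{1-\beta_k}(z^0-z^{k+1}) - \frac{1}{L}\tilde z^k$), choosing whichever is convenient so that the inner products with $G(z^{k+1})$ and $G(z^k)$ produce the terms $\inprod{G(z^{k+1})}{z^0-z^{k+1}}$ and $\inprod{G(z^k)}{z^0-z^k}$ that appear in the claim. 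Expanding $\norm{G(z^{k+1})-G(z^k)}^2 = \norm{G(z^{k+1})}^2 - 2\inprod{G(z^{k+1})}{G(z^k)} + \norm{G(z^k)}^2$ on the right-hand side and collecting all cross terms, one should be able to rearrange into \eqref{eq-lemma-key-estimate-1}. The error term $G(z^k)-\tilde z^k$ enters because $\tilde z^k$, not $G(z^k)$, is what actually appears in $z^{k+1}-z^k$; so every time $\tilde z^k$ shows up I would write $\tilde z^k = G(z^k) - (G(z^k)-\tilde z^k)$ and split, which is exactly where the last two terms of \eqref{eq-lemma-key-estimate-1} come from (after a Young/Cauchy--Schwarz step to bound a residual $\norm{G(z^k)-\tilde z^k}^2$ contribution).

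For the specialization \eqref{eq-lemma-key-estimate-2}, set $\beta_k = 1/(k+2)$, so $\frac{\beta_k}{1-\beta_k} = \frac{1}{k+1}$ and $1-2\beta_k = \frac{k}{k+2}$. Multiplying \eqref{eq-lemma-key-estimate-1} through by $(k+1)(k+2)$ turns the left side into $\frac{(k+1)(k+2)}{2L}\norm{G(z^{k+1})}^2 - (k+2)\inprod{G(z^{k+1})}{z^0-z^{k+1}} = \mathcal{L}_{k+1}$ and the first two terms on the right into $\frac{k(k+1)}{2L}\norm{G(z^k)}^2 - (k+1)\inprod{G(z^k)}{z^0-z^k} = \mathcal{L}_k$. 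The leftover terms are $\frac{(k+1)(k+2)}{L}\inprod{G(z^k)}{G(z^k)-\tilde z^k} + \frac{(k+1)(k+2)}{2L}\norm{G(z^k)-\tilde z^k}^2$, which I would bound using Young's inequality $\inprod{a}{b} \leq \frac{1}{2c}\norm{a}^2 + \frac{c}{2}\norm{b}^2$ with a constant $c$ chosen so that the $\norm{G(z^k)}^2$ coefficient comes out to $\frac{1}{12L}$ (this forces $c$ on the order of $(k+1)(k+2)$, roughly $c = 6(k+1)(k+2)$), and then absorb the resulting $\norm{G(z^k)-\tilde z^k}^2$ terms into a single $\frac{4(k+1)^2}{L}\norm{G(z^k)-\tilde z^k}^2$ by a crude bound like $(k+1)(k+2) \leq 2(k+1)^2$ and $3(k+1)(k+2) + \tfrac12(k+1)(k+2) \leq 4 \cdot 2 (k+1)^2$, checking the numerics work for all $k \geq 0$.

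The main obstacle I anticipate is the bookkeeping in deriving \eqref{eq-lemma-key-estimate-1}: there is genuine freedom in which substitution from Lemma~\ref{lemma-zdiff} to use on which side, and a careless choice leads to leftover inner-product terms (e.g. a stray $\inprod{G(z^{k+1})}{\tilde z^k}$ or $\inprod{G(z^{k+1})}{z^0-z^k}$) that do not telescope. The trick will be to use the ``part 2'' expression $z^{k+1}-z^k = \frac{\beta_k}{1-\beta_k}(z^0-z^{k+1}) - \frac{1}{L}\tilde z^k$ when pairing with $G(z^{k+1})$ and the ``part 1'' expression when pairing with $G(z^k)$, so that the $\tilde z^k$ pieces combine with the $\norm{G(z^{k+1})-G(z^k)}^2$ expansion to produce exactly a $-\frac{1}{L}\inprod{G(z^{k+1})}{\tilde z^k}$ and $+\frac{1}{L}\inprod{G(z^k)}{\tilde z^k}$ that I can then further manipulate; the $\beta_k$-weighted $\inprod{G(z^k)}{G(z^k)-\tilde z^k}$ term should survive and match the claim. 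A secondary, milder concern is verifying the constants $\tfrac{1}{12}$ and $4$ in \eqref{eq-lemma-key-estimate-2} are actually attainable with a single Young's-inequality split valid uniformly in $k$; if not, I would allow a slightly looser constant, but since the statement fixes these I expect the intended split to be the one described above.
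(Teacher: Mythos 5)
Your route for \eqref{eq-lemma-key-estimate-1} is the paper's route: start from co-coerciveness of $G$ at $(z^{k+1},z^k)$, pair the ``part 2'' expression of $z^{k+1}-z^k$ from Lemma~\ref{lemma-zdiff} with $G(z^{k+1})$ and the ``part 1'' expression with $G(z^k)$, write $\tilde z^k$ as $G(z^k)-(G(z^k)-\tilde z^k)$, and use one Young step $\frac{1}{L}\inprod{G(z^{k+1})-G(z^k)}{G(z^k)-\tilde z^k}\le \frac{1}{2L}\norm{G(z^{k+1})-G(z^k)}^2+\frac{1}{2L}\norm{G(z^k)-\tilde z^k}^2$, whose first term cancels the $\frac{1}{2L}\norm{G(z^{k+1})-G(z^k)}^2$ produced on the left by the co-coercivity expansion. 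You leave that cancellation implicit, but the plan is the intended one and works.

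The specialization step, however, contains a concrete slip that, taken literally, would fail. The cross term in \eqref{eq-lemma-key-estimate-1} carries the factor $\frac{\beta_k}{L}$, so after multiplying through by $(k+1)(k+2)$ with $\beta_k=1/(k+2)$ it becomes $\frac{k+1}{L}\inprod{G(z^k)}{G(z^k)-\tilde z^k}$, not $\frac{(k+1)(k+2)}{L}\inprod{G(z^k)}{G(z^k)-\tilde z^k}$ as you wrote. With your (too large) coefficient, forcing the $\norm{G(z^k)}^2$ piece to $\frac{1}{12L}$ indeed requires a Young parameter of order $(k+1)(k+2)$, but then the induced error term has coefficient of order $(k+1)^2(k+2)^2/L=O(k^4)$, which cannot be absorbed into $\frac{4(k+1)^2}{L}\norm{G(z^k)-\tilde z^k}^2$ and would also break the downstream rate proof (the sums in Theorem~\ref{thm-convergence-rate} would involve $(i+1)^4\gamma_i^2$ instead of $(i+1)^2\gamma_i^2$). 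Once the $\beta_k$ factor is restored, everything matches the paper: Young with parameter $6(k+1)$ gives $\frac{k+1}{L}\inprod{G(z^k)}{G(z^k)-\tilde z^k}\le \frac{1}{12L}\norm{G(z^k)}^2+\frac{3(k+1)^2}{L}\norm{G(z^k)-\tilde z^k}^2$, and $\frac{(k+1)(k+2)}{2L}\le\frac{(k+1)^2}{L}$ turns the remaining quadratic term into one more $\frac{(k+1)^2}{L}\norm{G(z^k)-\tilde z^k}^2$, yielding exactly the constants $\frac{1}{12L}$ and $\frac{4(k+1)^2}{L}$ in \eqref{eq-lemma-key-estimate-2}. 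So this is a fixable bookkeeping error rather than a wrong approach, but as written your Young split does not deliver the stated bound.
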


\begin{proof}
    By the co-coerciveness of the mapping $G$, we see that 
    \[
        \frac{1}{L}\norm{G(z^{k+1}) - G(z^k)}^2 \leq \inprod{G(z^{k+1}) - G(z^k)}{z^{k+1} - z^k},
    \]  
    which implies that 
    \begin{equation}
        \label{eq-lemma-key-estimate-3}
        \begin{aligned}
            &\; \frac{1}{2L}\norm{G(z^{k+1})}^2 + \frac{1}{2L}\norm{G(z^{k})}^2 +  \frac{1}{2L}\norm{G(z^{k+1}) - G(z^k)}^2 \\
            \leq &\;  \inprod{G(z^{k+1})}{z^{k+1} - z^k + \frac{1}{L}G(z^k)} - \inprod{G(z^k)}{z^{k+1} - z^k}.
        \end{aligned}
    \end{equation}
    The first two expressions for $z^{k+1} - z^k$ in Lemma \ref{lemma-zdiff} together with \eqref{eq-lemma-key-estimate-3} and the Cauchy-Schwarz inequality imply that 
        \begin{align*}
            &\; \frac{1}{2L}\norm{G(z^{k+1})}^2 + \frac{1}{2L}\norm{G(z^{k})}^2 +  \frac{1}{2L}\norm{G(z^{k+1}) - G(z^k)}^2 \\
            \leq &\; \inprod{G(z^{k+1})}{\frac{\beta_k}{1-\beta_k}(z^0 - z^{k+1}) - \frac{1}{L}\tilde{z}^k + \frac{1}{L}G(z^k)} - \inprod{G(z^k)}{\beta_k(z^0 - z^k) - \frac{1-\beta_k}{L}\tilde{z}^k} \\
            = &\; \frac{\beta_k}{1-\beta_k}\inprod{G(z^{k+1})}{z^0 - z^{k+1}} - \beta_k\inprod{G(z^k)}{z^0 - z^k} + \frac{1}{L}\inprod{G(z^{k+1})}{G(z^k) - \tilde{z}^k} \\
            &\; + \frac{1-\beta_k}{L}\inprod{G(z^k)}{\tilde{z}^k}\\
            = &\; \frac{\beta_k}{1-\beta_k}\inprod{G(z^{k+1})}{z^0 - z^{k+1}} - \beta_k\inprod{G(z^k)}{z^0 - z^k}  + \frac{1}{L}\norm{G(z^k)}^2 \\
            &\; + \frac{1}{L}\inprod{G(z^{k+1}) - G(z^k)}{G(z^k) - \tilde{z}^k} - \frac{\beta_k}{L}\inprod{G(z^k)}{\tilde{z}^k} \\
            \leq &\; \frac{\beta_k}{1-\beta_k}\inprod{G(z^{k+1})}{z^0 - z^{k+1}} - \beta_k\inprod{G(z^k)}{z^0 - z^k}  + \frac{1 - \beta_k}{L}\norm{G(z^k)}^2 \\
            &\; + \frac{1}{2L}\norm{G(z^{k+1}) - G(z^k)}^2 + \frac{1}{2L}\norm{G(z^k) - \tilde{z}^k}^2 + \frac{\beta_k}{L}\inprod{G(z^k)}{G(z^k) - \tilde{z}^k}.
        \end{align*}
    Therefore, the inequality \eqref{eq-lemma-key-estimate-1} holds by rearranging terms in the above inequality. By substituting $ \beta_k:= 1 / (k+2) $ into \eqref{eq-lemma-key-estimate-1}, and using the Cauchy-Schwarz inequality and the definition of the potential function $ \mathcal{L}_k $, for $k\geq 0$, we see that 
    	\begin{align*}
    		\mathcal{L}_{k+1} \leq &\;  \mathcal{L}_k  + \frac{k+1}{L}\inprod{G(z^k)}{G(z^k) - \tilde{z}^k} + \frac{(k+1)(k+2)}{2L}\norm{G(z^k) - \tilde{z}^k}^2 \\
    		\leq &\; \mathcal{L}_k  + \frac{k+1}{L}\inprod{G(z^k)}{G(z^k) - \tilde{z}^k} + \frac{(k+1)^2}{L}\norm{G(z^k) - \tilde{z}^k}^2 \\
    		\leq &\; \mathcal{L}_k  + \frac{k+1}{2L}\left(\frac{1}{6(k+1)}\norm{G(z^k)}^2 + 6(k+1) \norm{G(z^k) - \tilde{z}^k}^2 \right) \\
            &\; + \frac{(k+1)^2}{L}\norm{G(z^k) - \tilde{z}^k}^2 \\
    		= &\; \mathcal{L}_k  + \frac{1}{12L}\norm{G(z^k)}^2 + \frac{4(k+1)^2}{L}\norm{G(z^k) - \tilde{z}^k}^2,
    	\end{align*}
    which proves the inequality \eqref{eq-lemma-key-estimate-2}. Therefore, the proof is completed.
\end{proof}

The following lemma quantifies the inexactness in the expectation $\bb{E} \left[\norm{G(z^k) - \tilde{z}^k}\right]$, which is key to our later convergence analysis. Here, we use $\bb{E}$ to denote the expectation with respect to all the randomness at any iteration of \eqref{eq-isHalpern}.
\begin{lemma}
	\label{lemma-exp-error}
	Let $\{z^k\}$ be the sequence generated by \eqref{eq-isHalpern}, then it holds that
	\[
		\bb{E} \left[\norm{G(z^k) - \tilde{z}^k}^2\right] \leq \gamma_k + \sigma_k,\quad \forall k\geq 0.
	\]
\end{lemma}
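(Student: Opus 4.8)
The plan is to trace how the two approximations made inside \eqref{eq-isHalpern} --- the variance-reduced estimate $\widetilde F(z^k)$ of $F(z^k)$, and the inexactly computed point $\bar z^k$ replacing the exact resolvent step --- propagate into the residual $G(z^k)-\tilde z^k$. Recalling from \eqref{eq-finitesum-G} that $G(z^k)=\frac1\alpha\big(z^k-J_{\alpha E}(z^k-\alpha F(z^k))\big)$ while the scheme sets $\tilde z^k=\frac1\alpha(z^k-\bar z^k)$, subtracting cancels the $z^k$-terms and gives the exact identity $G(z^k)-\tilde z^k=\frac1\alpha\big(\bar z^k-J_{\alpha E}(z^k-\alpha F(z^k))\big)$. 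First I would insert the auxiliary point $J_{\alpha E}(z^k-\alpha\widetilde F(z^k))$ and apply the triangle inequality, splitting the right-hand side into a resolvent-inexactness term $\bar z^k-J_{\alpha E}(z^k-\alpha\widetilde F(z^k))$ and an estimation-error term $J_{\alpha E}(z^k-\alpha\widetilde F(z^k))-J_{\alpha E}(z^k-\alpha F(z^k))$.

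Next I would bound the two terms separately. The first is $\le\frac{\sqrt2}{2}\alpha\gamma_k$ deterministically, directly from the second line of \eqref{eq-isHalpern}. For the second, I would use that the resolvent of the monotone operator $\alpha E$ is firmly non-expansive, hence $1$-Lipschitz, which transfers the estimation error through $J_{\alpha E}$ and yields $\le\alpha\norm{\widetilde F(z^k)-F(z^k)}$. Dividing by $\alpha$ gives the pointwise estimate $\norm{G(z^k)-\tilde z^k}\le\frac{\sqrt2}{2}\gamma_k+\norm{\widetilde F(z^k)-F(z^k)}$. Squaring via $(a+b)^2\le 2a^2+2b^2$ and taking the full expectation, invoking the first line of \eqref{eq-isHalpern} in the form $\bb{E}[\norm{F(z^k)-\widetilde F(z^k)}^2]\le\frac12\sigma_k^2$, the calibrated constants collapse everything to $\bb{E}[\norm{G(z^k)-\tilde z^k}^2]\le\gamma_k^2+\sigma_k^2$. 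Since the tolerances employed throughout satisfy $\gamma_k,\sigma_k\in[0,1]$ (they are $\epsilon/\sqrt{k+1}$ or $(k+1)^{-a}$ in the corollaries), we have $\gamma_k^2+\sigma_k^2\le\gamma_k+\sigma_k$, which is the stated bound.

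I expect the only real care to be bookkeeping: the constants $\tfrac1\alpha$, $\tfrac{\sqrt2}{2}$ and $\tfrac12$ in \eqref{eq-isHalpern} are chosen precisely so that no stray factor survives the squaring step, so that calibration is worth double-checking. The second point to handle cleanly is that the expectation is over all of the algorithm's randomness: one should condition on the history up to iteration $k$ before applying the assumed per-step variance bound on $\widetilde F(z^k)$, then remove the conditioning via the tower property. Neither step presents a genuine difficulty; the substantive content is simply the split plus non-expansiveness of the resolvent.
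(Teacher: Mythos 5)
Your proof is correct and follows essentially the same route as the paper's: the same splitting through the auxiliary point $J_{\alpha E}(z^k-\alpha\widetilde F(z^k))$, the firm non-expansiveness of the resolvent, and the same squaring step. Your bookkeeping in fact yields the sharper bound $\gamma_k^2+\sigma_k^2$ (which is what the paper's own computation produces, modulo a typo, and what is actually invoked as $(\sigma_k+\gamma_k)^2$ in the proof of Theorem \ref{thm-rate-ishi}), so your additional observation that $\gamma_k,\sigma_k\le 1$ is needed only to recover the lemma's literally stated, weaker right-hand side $\gamma_k+\sigma_k$.
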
  

\begin{proof}
	Since the resolvent operator $J_{\alpha E}$ is firmly non-expansive (hence $1$-Lipschitz continuous), we see that 
		\begin{align*}
			&\; \norm{G(z^k) - \tilde{z}^k}^2 \\
			= &\; \norm{\frac{1}{\alpha}\left(z^k - J_{\alpha E}\left(z^k -  \alpha F(z^k)\right)\right)- \tilde{z}^k}^2 \\
			= &\; \frac{1}{\alpha^2}\norm{J_{\alpha E}\left(z^k -  \alpha F(z^k)\right) - \bar{z}^k}^2 \\
			\leq &\; \frac{2}{\alpha^2}\norm{J_{\alpha E}\left(z^k -  \alpha F(z^k)\right) - J_{\alpha E}\left(z^k -  \alpha \widetilde{F}(z^k)\right)}^2 + \frac{2}{\alpha^2}\norm{J_{\alpha E}\left(z^k - \alpha \widetilde{F}(z^k)\right) - \bar{z}^k}^2 \\
			\leq &\; 2\norm{F(z^k) - \widetilde{F}(z^k)}^2 + \gamma_k,
		\end{align*}
    where the first equality uses the definition of $G$ and the second equality uses the fact that $\tilde z^k = \frac{1}{\alpha}(z^k - \bar{z}^k)$, and the first inequality is due to the Cauchy-Schwarz inequality.
	Then, it follows that: 
	\[
		\bb{E} \left[\norm{G(z^k) - \tilde{z}^k}^2\right] \leq  2\bb{E}\left[\norm{F(z^k) - \widetilde{F}(z^k)}^2\right] +  \gamma_k 
			\leq    \sigma_k + \gamma_k,
	\]
	which completes the proof.
\end{proof}

The next lemma shows that $f_i$ defined in \eqref{eq-WDRSL-minmax} are L-smooth.
\begin{lemma}
    \label{lemma-fi-Lip}
    Let  $f_i:\bb{R}^d\times\bb{R}^N\to \bb{R}$ be defined in \eqref{eq-WDRSL-minmax}, i.e., for $x := (w,\lambda)\in \bb{R}^d$ and $y\in \bb{R}^N$,
    \[
        f_i(x,y):=\Psi_0(w) + \lambda(\theta-\kappa) + \Psi\left(\inprod{\hat\phi_i}{w}\right) + y_i\left(\hat\psi_i\inprod{\hat\phi_i}{w} - \lambda\kappa\right),\quad \forall\;  i = 1,\dots,N.
    \]
    Then, there exists a constant $L_0 >0$ such that 
    \[
        \norm{\nabla f_{i}(x', y') - \nabla f_{i}(x, y)} \leq L_0 \norm{\begin{pmatrix}
		  	x' - x \\ y' - y
		  \end{pmatrix}},\quad \forall\; x,x' \in \bb{R}^d,\; \forall\; y,y'\in \bb{R}^N,\; \forall \;i = 1,\dots,N.
    \]
\end{lemma}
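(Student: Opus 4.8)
The plan is to bound the gradient difference componentwise. Recall that
\[
    \nabla f_{i}(x,y) =
    \begin{pmatrix}
        \Psi_0'(w)+\Psi'\left(\inprod{\hat\phi_i}{w}\right)\hat\phi_i + y_i \hat\psi_i \hat\phi_i \\
        \theta - \kappa -  y_i\kappa \\
        \left(\hat\psi_i\inprod{\hat\phi_i}{w} - \lambda \kappa\right) e_i
    \end{pmatrix},
\]
so for two points $(x,y) = (w,\lambda,y)$ and $(x',y') = (w',\lambda',y')$ I would write $\nabla f_i(x',y') - \nabla f_i(x,y)$ as a vector with three blocks and estimate the norm of each block separately, then combine via $\norm{(a;b;c)}^2 = \norm{a}^2 + \norm{b}^2 + \norm{c}^2$ and the inequality $(\sum_j t_j)^2 \le m \sum_j t_j^2$.

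First I would handle the $w$-block: $\Psi_0'(w') - \Psi_0'(w)$ is bounded by $\overline{L}_0\norm{w'-w}$ since $\Psi_0$ is $\overline{L}_0$-smooth; the term $\big(\Psi'(\inprod{\hat\phi_i}{w'}) - \Psi'(\inprod{\hat\phi_i}{w})\big)\hat\phi_i$ is bounded by $\overline{L}_0\norm{\hat\phi_i}^2\norm{w'-w}$ using $\overline{L}_0$-smoothness of $\Psi$ and Cauchy--Schwarz; and $y_i'\hat\psi_i\hat\phi_i - y_i\hat\psi_i\hat\phi_i$ is bounded by $\norm{\hat\phi_i}\abs{y_i'-y_i} \le \norm{\hat\phi_i}\norm{y'-y}$ (using $\abs{\hat\psi_i}=1$). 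The $\lambda$-component difference is $\kappa\abs{y_i'-y_i} \le \kappa\norm{y'-y}$. The $y$-block has norm $\abs{\hat\psi_i\inprod{\hat\phi_i}{w'-w} - \kappa(\lambda'-\lambda)} \le \norm{\hat\phi_i}\norm{w'-w} + \kappa\abs{\lambda'-\lambda}$. Setting $\Phi := \max_{1\le i\le N}\norm{\hat\phi_i}$ (finite since the data set is finite), each block is bounded by a constant multiple of $\norm{w'-w} + \abs{\lambda'-\lambda} + \norm{y'-y}$, hence of $\norm{(x'-x; y'-y)}$; squaring, summing the three blocks, and taking square roots yields the claim with an explicit $L_0$ depending only on $\overline{L}_0$, $\kappa$, $\theta$ and $\Phi$ (e.g. one may take $L_0 = c(\overline{L}_0(1+\Phi^2) + \kappa + \Phi)$ for a small absolute constant $c$).

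I do not expect any genuine obstacle here — the statement is essentially a bookkeeping exercise in collecting Lipschitz constants. The only mild subtlety is making sure the constant $L_0$ is uniform in $i$, which is why one passes to $\Phi = \max_i \norm{\hat\phi_i}$ at the outset; the finiteness of the training set $\{\hat\xi_i\}_{i=1}^N$ guarantees this maximum exists. A secondary point worth stating cleanly is that the $\theta - \kappa$ constant term in the $\lambda$-component drops out of the difference, so only the $\kappa y_i$ part contributes, and similarly the $\Psi_0(w)$ and $\lambda(\theta-\kappa)$ parts of $f_i$ do not affect the $y$-gradient. With these observations the proof is a direct triangle-inequality computation.
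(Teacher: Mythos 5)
Your proposal is correct and follows essentially the same route as the paper's proof: a block-wise decomposition of $\nabla f_i(x',y') - \nabla f_i(x,y)$, the smoothness of $\Psi_0$ and $\Psi$, Cauchy--Schwarz, $\abs{\hat\psi_i}=1$, and passing to $\max_{1\le i\le N}\norm{\hat\phi_i}$ to obtain a constant $L_0$ uniform in $i$. (Only a cosmetic remark: since the constant term $\theta-\kappa$ cancels in the difference, as you yourself note, the resulting $L_0$ does not actually depend on $\theta$.)
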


\begin{proof}
    We consider any $x := (w,\lambda), \; x':=(w',\lambda')\in \bb{R}^d$, and $y,y'\in\bb{R}^N$. A simple calculation shows that 
	\begin{align*}
		&\; \norm{\nabla f_{i}(x', y') - \nabla f_{i}(x, y)}^2 \\
		 = &\; \norm{\Psi_0'(w') - \Psi_0'(w) + \Psi'\left(\inprod{\hat\phi_i}{w'}\right)\hat\phi_i - \Psi'\left(\inprod{\hat\phi_i}{w}\right)\hat\phi_i + (y_i' - y_i) \hat\psi_i \hat\phi_i}^2 \\
		 &\;  + \kappa^2\abs{y_i' - y_i}^2 + \norm{\left(\hat\psi_i\inprod{\hat\phi_i}{w'} - \lambda' \kappa\right) e_i - \left(\hat\psi_i\inprod{\hat\phi_i}{w} - \lambda \kappa\right) e_i}^2 \\
		 \leq &\; 3\norm{\Psi_0'(w') - \Psi_0'(w)}^2 + 3\norm{\Psi'\left(\inprod{\hat\phi_i}{w'}\right)\hat\phi_i - \Psi'\left(\inprod{\hat\phi_i}{w}\right)\hat\phi_i}^2 + 3\norm{(y_i' - y_i) \hat\psi_i \hat\phi_i}^2 \\
		  &\;  + \kappa^2\abs{y_i' - y_i}^2 +2\norm{\left(\hat\psi_i\inprod{\hat\phi_i}{w'} -  \hat\psi_i\inprod{\hat\phi_i}{w}\right) e_i}^2 + 2\kappa^2\abs{\lambda' - \lambda}^2 \\
		  \leq &\; 3\widetilde{L}_0^2\norm{w' - w}^2 +  3\norm{\hat\phi_i}^2\abs{\Psi'\left(\inprod{\hat\phi_i}{w'}\right) - \Psi'\left(\inprod{\hat\phi_i}{w}\right)}^2 +  \left(3\norm{\hat\phi_i}^2+\kappa^2\right)\abs{y_i' - y_i}^2 \\
		  &\;    +  2\norm{\hat\phi_i}^2\norm{w' -  w}^2 + 2\kappa^2\abs{\lambda' - \lambda}^2 \\
		  \leq &\; \left(3\widetilde{L}_0^2 + 3\widetilde{L}_0^2\norm{\hat\phi_i}^4+2\norm{\hat\phi_i}^2\right)\norm{w'-w}^2 +  \left(3\norm{\hat\phi_i}^2+\kappa^2\right)\abs{y_i' - y_i}^2 + 2\kappa^2\abs{\lambda' - \lambda}^2 \\
		  \leq &\; \max\left\{3\widetilde{L}_0^2 + 3\widetilde{L}_0^2\left(\max_{1\leq i\leq N}\norm{\hat\phi_i}\right)^4+2\left(\max_{1\leq i\leq N}\norm{\hat\phi_i}\right)^2, 2\kappa^2 \right\}\norm{x' - x}^2 \\
		  &\;  + \left(3\left(\max_{1\leq i\leq N}\norm{\hat\phi_i}\right)^2+\kappa^2\right)\norm{y' - y}^2 \\
		  \leq &\;\underbrace{\max\left\{3\widetilde{L}_0^2 + 3\widetilde{L}_0^2\left(\max_{1\leq i\leq N}\norm{\hat\phi_i}\right)^4+2\left(\max_{1\leq i\leq N}\norm{\hat\phi_i}\right)^2, 2\kappa^2, \left(3\left(\max_{1\leq i\leq N}\norm{\hat\phi_i}\right)^2+\kappa^2\right) \right\}}_{L_0^2}\\
    &\; \cdot  \norm{\begin{pmatrix}
		  	x' - x \\ y' - y
		  \end{pmatrix}}^2,
	\end{align*}
where the first three inequalities are derived from the Cauchy-Schwarz inequality and the fact that $\Psi_0$ and $\Psi$ have Lipschitz gradients and $\hat\psi_i\in\{-1,1\}$, for $i = 1,\dots, N$. The above inequality further shows that $f_{i}$ is $L_0$-smooth, for $i = 1,\dots, N$. Thus, the proof is completed.
\end{proof}

Under a certain growth condition for the loss function $\ell$, known results in the literature have shown that the inner problem $\sup_{\mu\in \bb{B}_\theta(\hat{\mu}_N)}\; \bb{E}_{\xi\sim \mu}\left[\ell(x, \xi)\right]$ in the WDRO problem \eqref{eq-wdro} has a strong dual problem. These results are summarized in the following lemma.
\begin{lemma}{\cite{gao2022distributionally}}
	\label{lemma-polish-dual}
	Suppose that $\Xi$ is convex and for any $ x\in \bb{R}^n $,
	\[
		\limsup_{d(\xi,\xi_0)\to \infty}\frac{\ell(x, \xi) - \ell(x, \xi_0)}{d^p(\xi, \xi_0)} < \infty,
	\]
	where $\xi_0\in \Xi$ is any given point. Then, for any $x\in \bb{R}^n$,
	\[
		\sup_{\mu\in \bb{B}_\theta(\hat{\mu}_N)} \bb{E}_{\xi\sim \mu}\left[\ell(x, \xi)\right] = \sup_{\xi_i\in \Xi, i = 1,\dots, N}\left\{ \frac{1}{N}\sum_{i = 1}^N\ell(x,\xi_i): \frac{1}{N}\sum_{i=1}^Nd^p(\xi_i, \hat{\xi}_i) \leq \theta^p\right\},
	\]
	where $\{\hat{\xi}_i\}_{i = 1}^N$ denotes the training data set, $\hat{\mu}_N$ denotes the empirical distribution with respect to the training data set and $\theta > 0$ is a given parameter.
\end{lemma}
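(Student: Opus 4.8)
The plan is to prove the two inequalities separately after rewriting the left-hand side as a generalized linear program over conditional measures, and then to use Lagrangian duality to collapse that program onto the finite-dimensional one. Fix $x\in\bb{R}^n$. Because $\hat{\mu}_N=\frac{1}{N}\sum_{i=1}^N\delta_{\hat\xi_i}$ is finitely supported, every coupling $\pi\in\Pi(\mu,\hat{\mu}_N)$ disintegrates as $\pi=\frac{1}{N}\sum_{i=1}^N\mu_i\otimes\delta_{\hat\xi_i}$ with $\mu_i\in\mathcal{P}(\Xi)$, and conversely any such family yields a coupling whose first marginal is $\mu=\frac{1}{N}\sum_i\mu_i$. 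Hence the left-hand side equals
\[
V:=\sup\Bigl\{\textstyle\frac1N\sum_{i=1}^N\int_\Xi\ell(x,\xi)\mu_i(d\xi):\ \mu_i\in\mathcal{P}(\Xi),\ \frac1N\sum_{i=1}^N\int_\Xi d^p(\xi,\hat\xi_i)\mu_i(d\xi)\le\theta^p\Bigr\}.
\]
The inequality $V\ge$ (right-hand side) is immediate: for any $\{\xi_i\}_{i=1}^N$ feasible for the finite-dimensional problem, the Dirac choice $\mu_i=\delta_{\xi_i}$ is feasible for $V$ with the same value.

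For the reverse inequality I would use weak duality. For every $\lambda\ge0$ and every feasible $\{\mu_i\}$,
\[
\textstyle\frac1N\sum_i\int\ell(x,\xi)\mu_i(d\xi)\le\lambda\theta^p+\frac1N\sum_i\int\bigl(\ell(x,\xi)-\lambda d^p(\xi,\hat\xi_i)\bigr)\mu_i(d\xi)\le L(\lambda),
\]
where $L(\lambda):=\lambda\theta^p+\frac1N\sum_{i=1}^N\sup_{\xi\in\Xi}\{\ell(x,\xi)-\lambda d^p(\xi,\hat\xi_i)\}$, the last step using that a linear functional on $\mathcal{P}(\Xi)$ is maximized at a point mass. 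Thus $V\le\inf_{\lambda\ge0}L(\lambda)$. The growth hypothesis (which is insensitive to the base point, so holds with $\hat\xi_i$ in place of $\xi_0$) guarantees that for all sufficiently large $\lambda$ the map $\xi\mapsto\ell(x,\xi)-\lambda d^p(\xi,\hat\xi_i)$ is real-valued and coercive; hence $L(\lambda)<\infty$ there and, using upper semicontinuity of $\ell(x,\cdot)$ and closedness of $\Xi$, the inner suprema are attained, say at $\xi_i(\lambda)$. Since $L$ is convex on $[0,\infty)$ and $L(\lambda)\ge\lambda\theta^p+\frac1N\sum_i\ell(x,\hat\xi_i)\to+\infty$ (as $\theta^p>0$), while $\inf_{\lambda\ge0}L(\lambda)\ge V\ge\frac1N\sum_i\ell(x,\hat\xi_i)>-\infty$, the infimum is attained at some $\lambda^\star$.

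It remains to show $L(\lambda^\star)\le$ (right-hand side). By Danskin's theorem $g(\lambda):=\theta^p-\frac1N\sum_i d^p(\xi_i(\lambda),\hat\xi_i)$ is a subgradient of $L$ at $\lambda$, so optimality of $\lambda^\star$ forces either $\lambda^\star=0$ with $g(0)\ge0$, or $\lambda^\star>0$ with $g(\lambda^\star)=0$. In the first case $\{\xi_i(0)\}$ is feasible for the finite-dimensional problem and $L(0)=\frac1N\sum_i\ell(x,\xi_i(0))$ is bounded by its optimal value; in the second case $\{\xi_i(\lambda^\star)\}$ is feasible (with the budget constraint tight) and $L(\lambda^\star)=\frac1N\sum_i\ell(x,\xi_i(\lambda^\star))+\lambda^\star g(\lambda^\star)=\frac1N\sum_i\ell(x,\xi_i(\lambda^\star))$ is likewise bounded by it. Either way $V\le\inf_{\lambda\ge0}L(\lambda)=L(\lambda^\star)\le$ (right-hand side), which together with the reverse inequality yields the claimed identity. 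I expect the main obstacle to be purely technical: the measure-theoretic bookkeeping (disintegration of couplings, interchange of the supremum over measures with the pointwise supremum) together with guaranteeing attainment of the inner maxima and the legitimacy of the Danskin step. If $\ell(x,\cdot)$ fails to be upper semicontinuous, or $\lambda^\star$ lands exactly at the threshold where coercivity degenerates, one replaces exact maximizers by $\varepsilon$-maximizers and $\lambda^\star$ by an $\varepsilon$-optimal multiplier and lets $\varepsilon\downarrow0$, the penalty structure keeping the near-feasible points' objective within $O(\varepsilon)$ of $L(\lambda^\star)$; this is in essence the argument of \citet{gao2022distributionally}.
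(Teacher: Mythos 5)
The paper does not prove this lemma at all -- it is quoted verbatim as a known result from \citep{gao2022distributionally} (their strong-duality theorem together with the reduction to finitely supported worst-case distributions under convexity/concavity assumptions), so there is no in-paper proof to compare against. Judged on its own, your attempt has a genuine gap at the Danskin/complementary-slackness step, and the gap is not merely technical: the identity you are trying to prove is false under only the hypotheses you actually use (growth condition plus duality), so some step must fail. Concretely, take $N=1$, $\Xi=\bb{R}$, $p=1$, $d(\xi,\xi')=\abs{\xi-\xi'}$, $\hat\xi_1=0$, $\theta=1$, $\ell(x,\xi)=\max\{0,\abs{\xi}-1\}$ (smooth it if you like). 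The growth condition holds and $\Xi$ is convex, yet the right-hand side equals $0$ while the left-hand side equals $1$: put mass $\varepsilon$ at $\xi=1/\varepsilon$ and mass $1-\varepsilon$ at $0$, which is feasible and has expected loss $1-\varepsilon$. In this example $L(\lambda)=+\infty$ for $\lambda<1$ and $L(\lambda)=\lambda$ for $\lambda\geq 1$, so $\lambda^\star=1$ sits on the boundary of the effective domain with a strictly positive one-sided derivative; your dichotomy ``either $\lambda^\star=0$ with $g(0)\geq 0$ or $\lambda^\star>0$ with $g(\lambda^\star)=0$'' does not hold. More generally, even when $0\in\partial L(\lambda^\star)$, Danskin's theorem only says that $0$ lies in the convex hull of the values $\theta^p-\frac1N\sum_i d^p(\xi_i,\hat\xi_i)$ over \emph{all} selections of inner maximizers; it does not produce a single selection with $g(\lambda^\star)=0$. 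What it produces is a randomized selection, i.e.\ a worst-case distribution that may split mass at one atom (support of size $N+1$), and that is exactly the case your argument cannot collapse to the $N$-point problem.

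The missing ingredient is the structural assumption carried implicitly from Section~\ref{sec:wdro-cc}: $\ell(x,\cdot)$ is concave (the loss is convex-concave), together with convexity of $\Xi$ -- which, tellingly, your proof never uses -- and convexity of $\xi\mapsto d^p(\xi,\hat\xi_i)$ (e.g.\ $d$ induced by a norm, $p\geq 1$). With these, the inequality $V\leq$ RHS follows directly from your primal reformulation without any duality: for any feasible family $\{\mu_i\}$ with finite first moments, replace each $\mu_i$ by the Dirac at its barycenter $\bar\xi_i:=\int_\Xi\xi\,\mu_i(\dd\xi)\in\Xi$; Jensen's inequality applied to the concave map $\ell(x,\cdot)$ does not decrease the objective, and applied to the convex map $d^p(\cdot,\hat\xi_i)$ preserves feasibility. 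This is essentially the argument behind the cited result in \citep{gao2022distributionally}. Your lower bound $V\geq$ RHS via Dirac measures and your weak-duality bound $V\leq\inf_{\lambda\geq 0}L(\lambda)$ are both fine (and the disintegration of couplings against the empirical marginal is correct), but the step from $\inf_\lambda L(\lambda)$ to the $N$-point value cannot be repaired without invoking the concavity/convexity structure.
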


\section{Proofs of Main Results}
\subsection{Proof of Theorem \ref{thm-convergence-rate}}\label{proof-thm-rate}

\begin{proof}
	From Lemma \ref{lemma-key-estimate} and notice that $\mathcal{L}_0 = 0$, we can see that  
	\[
		\mathcal{L}_k \leq \frac{1}{12L}\sum_{i = 0}^{k-1}\norm{G(z^i)}^2 + \frac{4}{L}\sum_{i = 0}^{k-1}(i+1)^2\gamma_i^2, \quad k\geq 1.
	\]
	By the definition of the potential function $\mathcal{L}_k$, we can see that 
		\begin{align*}
			\frac{k(k+1)}{2L}\norm{G(z^k)}^2 \leq &\; \frac{1}{12L}\sum_{i = 0}^{k-1}\norm{G(z^i)}^2 + \frac{4}{L}\sum_{i = 0}^{k-1}(i+1)^2\gamma_i^2 + (k+1)\inprod{G(z^k)}{z^0 - z^k} \\
			\leq &\; \frac{1}{12L}\sum_{i = 0}^{k-1}\norm{G(z^i)}^2 + \frac{4}{L}\sum_{i = 0}^{k-1}(i+1)^2\gamma_i^2 + (k+1)\norm{G(z^k)}\norm{z^0 - z^*}
		\end{align*} 
	where the last inequality is due to the fact that 
	\begin{align*}
	    \inprod{G(z^k)}{z^0 - z^k} = &\; \inprod{G(z^k)}{z^0 - z^*} + \inprod{G(z^k)}{z^* - z^k} \\
     \leq &\;  \inprod{G(z^k)}{z^0 - z^*} \\
     \leq &\;  \norm{G(z^k)}\norm{z^0 - z^*},
	\end{align*}
	and $z^*$ is any solution such that $G(z^*) = 0$.
 The first inequality holds, since $G$ is co-coercive. As a consequence, it holds that 
	\begin{equation}
	\label{thm-rate-proof-1}
		\norm{G(z^k)}^2 \leq \frac{1}{6k(k+1)}\sum_{i = 0}^{k-1}\norm{G(z^i)}^2 + \frac{8}{k(k+1)}\sum_{i = 0}^{k-1}(i+1)^2\gamma_i^2 + \frac{2L}{k}\norm{G(z^k)}\norm{z^0 - z^*},  
	\end{equation}
	for $k\geq 1$. For notational simplicity, let us denote 
	\[
		a_k:= \norm{G(z^k)}, \quad k\geq 0, \quad b_k:= \sum_{i = 0}^{k-1}(i+1)^2\gamma_i^2,\quad k\geq 1,\quad D:= L\norm{z^0 - z^*}.
	\]
	Then, from \eqref{thm-rate-proof-1}, we see that, for $k\geq 1$, 
	\begin{align*}
		a_k^2 \leq &\;  \frac{1}{6k(k+1)}\sum_{i = 0}^{k-1} a_i^2 + \frac{8}{k(k+1)}b_k + \frac{2D}{k}a_k \\
		\leq &\; \frac{1}{6k(k+1)}\sum_{i = 0}^{k-1} a_i^2 + \frac{8}{k(k+1)}b_k + \frac{1}{2}\left(\frac{4D^2}{k^2} + a_k^2 \right),
	\end{align*}
	which implies that, for $k\geq 1$,
	\begin{equation}
	\label{thm-rate-proof-2}
	\begin{aligned}
	    a_k^2 \leq &\; \frac{1}{3k(k+1)}\sum_{i = 0}^{k-1} a_i^2 + \frac{16}{k(k+1)}b_k + \frac{4D^2}{k^2} \\
     \leq &\; \frac{1}{3k(k+1)}\sum_{i = 0}^{k-1} a_i^2 + \frac{48}{(k+1)(k+2)}b_k + \frac{24D^2}{(k+1)(k+2)}.
	\end{aligned}
	\end{equation}
	We next claim that the following inequality holds:
	\begin{equation}
		\label{thm-rate-proof-3}
		a_k^2 \leq \frac{(7D + 10\sqrt{b_k})^2}{(k+1)(k+2)},\quad k\geq 1.
	\end{equation}
	We shall prove the above claim by induction. First, from \eqref{thm-rate-proof-2} and notice that $a_0 \leq D$ by the $L$-Lipschitz continuity of $G$, we see that
	\[
		a_1^2 \leq \frac{1}{6}a_0^2 + 8b_1 + 4D^2 \leq  \frac{1}{6}D^2 + 8b_1 + 4D^2 \leq \frac{1}{6}(7D+10\sqrt{b_1})^2.
	\]
	So \eqref{thm-rate-proof-3} holds for $k = 1$. Suppose now that \eqref{thm-rate-proof-3} holds for indices $\{1,\dots, k-1\}$. Again, from \eqref{thm-rate-proof-2} and the fact that $b_k$ is non-decreasing, we deduce that 
		\begin{align*}
			a_k^2 \leq &\; \frac{1}{3k(k+1)}\sum_{i = 0}^{k-1} a_i^2 + \frac{48}{(k+1)(k+2)}b_k + \frac{24D^2}{(k+1)(k+2)} \\
			\leq &\; \frac{1}{3k(k+1)}\sum_{i = 0}^{k-1} \frac{(7D + 10\sqrt{b_i})^2}{(i+1)(i+2)} + \frac{48}{(k+1)(k+2)}b_k + \frac{24D^2}{(k+1)(k+2)} \\
			\leq &\; \frac{(7D + 10\sqrt{b_k})^2}{3k(k+1)}\sum_{i = 0}^{k-1} \frac{1}{(i+1)(i+2)} + \frac{48}{(k+1)(k+2)}b_k + \frac{24D^2}{(k+1)(k+2)} \\
			= &\; \frac{(7D + 10\sqrt{b_k})^2}{3(k+1)^2} + \frac{48}{(k+1)(k+2)}b_k + \frac{24D^2}{(k+1)(k+2)} \\
			\leq &\; \frac{(7D + 10\sqrt{b_k})^2}{(k+1)(k+2)},
		\end{align*}
	which shows that \eqref{thm-rate-proof-3} holds for index $k$. Thus, by induction, \eqref{thm-rate-proof-3} holds for all $k\geq 1$. Moreover, it is clear that \eqref{thm-rate-proof-3} trivially holds for $k = 0$. This proves \eqref{thm-rate-1}.
	
	To prove  \eqref{thm-rate-2}, we first observe that 
	\[
		\norm{\tilde{z}^k}^2 \leq 2\norm{G(z^k)}^2 + 2\norm{G(z^k) - \tilde{z}^k}^2 \leq 2a_k^2 + 2\gamma_k^2 \leq  \frac{2(7D + 10\sqrt{b_k})^2}{(k+1)(k+2)} + 2\gamma_k^2,\quad k\geq 0.
	\]
	Then, by the third statement of Lemma \ref{lemma-zdiff} and the fact that $b_k$ is non-decreasing, we see that for $k\geq 0$, 
		\begin{align*}
			\norm{z^{k+1} - z^k}^2
			\leq &\; \frac{2(k+1)^2}{L^2(k+2)^2}\norm{\tilde{z}^k}^2 + \frac{2k}{L^2(k+1)^2(k+2)^2}\sum_{i = 0}^{k-1}(i+1)^2\norm{\tilde{z}^i}^2 \\
			\leq &\;  \frac{4(k+1)^2}{L^2(k+2)^2}\left(\frac{(7D + 10\sqrt{b_k})^2}{(k+1)(k+2)} + \gamma_k^2 \right) \\
   &\; + \frac{4k}{L^2(k+1)^2(k+2)^2}\sum_{i = 0}^{k-1}\left(\frac{(i+1)(7D + 10\sqrt{b_k})^2}{i+2} + (i+1)^2 \gamma_i^2 \right) \\
			\leq &\; \frac{4(7D + 10\sqrt{b_k})^2}{L^2}\left(\frac{k+1}{(k+2)^3} + \frac{k^2}{(k+1)^2(k+2)^2}\right) \\
   &\; + \frac{4(k+1)^2}{L^2(k+2)^2}\gamma_k^2 + \frac{4k}{L^2(k+1)^2(k+2)^2}b_k \\
			\leq &\; \frac{8(7D + 10\sqrt{b_{k+1}})^2 + 8b_{k+1}}{L^2(k+1)(k+2)}\\
			\leq &\; \frac{8(7D + 11\sqrt{b_{k+1}})^2 }{L^2(k+1)(k+2)},
		\end{align*}
	which proves \eqref{thm-rate-2}. Therefore, the proof is completed.
\end{proof}

\subsection{Proof of Theorem \ref{thm-rate-ishi}}\label{proof-thm-rate-ishi}

\begin{proof}
	We recall from Lemma \ref{lemma-key-estimate} that 
	\[
		\mathcal{L}_{k+1} \leq \mathcal{L}_k  + \frac{1}{12L}\norm{G(z^k)}^2 + \frac{4(k+1)^2}{L}\norm{G(z^k) - \tilde{z}^k}^2,\quad k\geq 0.
	\]
	Taking the full expectation $\bb{E}$ (with respect to all the randomness) on both side of the above inequality yields 
	\[
		\bb{E}[\mathcal{L}_{k+1}] \leq \bb{E}[\mathcal{L}_k]  + \frac{1}{12L}\bb{E}\left[\norm{G(z^k)}^2\right] + \frac{4(k+1)^2}{L}(\sigma_k + \gamma_k)^2,\quad k\geq 0,
	\]
	where the third term is due to Lemma \ref{lemma-exp-error}. Then, by recursively applying the above and summing up the resulted inequalities gives:
	\begin{equation}\label{eq-thm-ishi-1}
	\bb{E}[\mathcal{L}_k] \leq \frac{1}{12L}\sum_{i = 0}^{k-1}\bb{E}\norm{G(z^i)}^2 + \frac{4}{L}\sum_{i = 1}^{k-1}(i+1)^2(\sigma_i + \gamma_i)^2,\quad k\geq 0.	
	\end{equation}
	Here, the case when $k= 0$ holds trivially since $\mathcal{L}_0 = 0$. By the definition of the potential function $\mathcal{L}_k$ 
	we see that 
	\begin{equation}
	\label{eq-thm-ishi-2}
		\bb{E}[\mathcal{L}_k] = \frac{k(k+1)}{2L}\bb{E}\left[\norm{G(z^k)}^2\right] - (k+1)\bb{E}\left[\inprod{G(z^k)}{z^0 - z^k}\right],\quad k\geq 0.	
	\end{equation}
	Moreover, let $z^*$ be any solution such that $G(z^*) = 0$, since 
	\begin{align*}
	    \bb{E}\left[\inprod{G(z^k)}{z^0 - z^k}\right] = &\; \bb{E}\left[\inprod{G(z^k)}{z^0 - z^*}\right] + \bb{E}\left[\inprod{G(z^k)}{z^* - z^k}\right] \\
        \leq &\;  \bb{E}\left[\inprod{G(z^k)}{z^0 - z^*}\right],
	\end{align*}
	it holds that 
	\begin{equation}
	\label{eq-thm-ishi-3}
		\bb{E}\left[\inprod{G(z^k)}{z^0 - z^k}\right] \leq \bb{E}\left[\norm{G(z^k)}\norm{z^0 - z^*}\right] = \norm{z^0 - z^*}\bb{E}\left[\norm{G(z^k)}\right].	
	\end{equation}
	Combining \eqref{eq-thm-ishi-1}, \eqref{eq-thm-ishi-2} and \eqref{eq-thm-ishi-3}, we get
	\begin{equation*}
		\begin{aligned}
			&\; \bb{E}\left[\norm{G(z^k)}^2 \right] \\
			\leq &\; \frac{1}{6k(k+1)}\sum_{i = 0}^{k-1}\bb{E}\left[\norm{G(z^i)}^2\right] + \frac{2L}{k}\norm{z^0 - z^*}\bb{E}\left[\norm{G(z^k)}\right] + \frac{8}{k(k+1)}\sum_{i = 0}^{k-1}(i+1)^2(\sigma_i + \gamma_i)^2 \\
			\leq &\; \frac{1}{6k(k+1)}\sum_{i = 0}^{k-1}\bb{E}\left[\norm{G(z^i)}^2\right] + \frac{2L}{k}\norm{z^0 - z^*}\left(\bb{E}\left[\norm{G(z^k)}^2\right]\right)^{\frac{1}{2}} \\
   &\; + \frac{8}{k(k+1)}\sum_{i = 0}^{k-1}(i+1)^2(\sigma_i + \gamma_i)^2,
		\end{aligned}
	\end{equation*}
	for all $ k\geq 1 $. Now denote 
	\[
		a_k:= \left(\bb{E}\left[\norm{G(z^k)}^2\right]\right)^{\frac{1}{2}}, \quad k\geq 0, \quad b_k:= \sum_{i = 0}^{k-1}(i+1)^2(\sigma_i + \gamma_i)^2,\quad k\geq 1,\quad D:= L\norm{z^0 - z^*}.
	\]
	Similar to \eqref{thm-rate-proof-2}, we can derive 
	\[
		a_k^2 \leq \frac{1}{3k(k+1)}\sum_{i = 0}^{k-1} a_i^2 + \frac{48}{(k+1)(k+2)}b_k + \frac{24D^2}{(k+1)(k+2)}.
	\]
	Then, by following the proof of Theorem \ref{thm-convergence-rate} directly, we see that 
	\[
		a_k^2\leq \frac{(7D + 10\sqrt{b_k})^2}{(k+1)(k+2)}, \quad k\geq 1,
	\]
	which proves \eqref{eq-thm-rate-ishi-G} since the case for $k = 0$ is trivial. Moreover, we can also verify that
		\begin{align*}
		    \bb{E}\left[\norm{\tilde{z}^k}^2\right] \leq &\;  2\bb{E}\left[\norm{G(z^k)}^2 + \norm{\tilde{z}^k - G(z^k)}^2\right] \\
      \leq &\; 2a_k^2 + 2(\sigma_k + \gamma_k)^2 \\
      \leq  &\; \frac{2(7D + 10\sqrt{b_k})^2}{(k+1)(k+2)} + 2(\sigma_k + \gamma_k)^2.
		\end{align*}
	Then, by following the proof of Theorem \ref{thm-convergence-rate} again, we can show that 
	\[
		a_k^2\leq \frac{(7D + 10\sqrt{b_k})^2}{(k+1)(k+2)},\quad \bb{E}\left[\norm{z^{k+1} - z^k}^2\right] \leq \frac{8(7D + 11\sqrt{b_{k+1}})^2 }{L^2(k+1)(k+2)},\quad k\geq 0,
	\]
	which is exactly \eqref{eq-thm-rate-ishi-Z}. Hence, the proof is completed.
\end{proof} 

\subsection{Proof of Lemma \ref{lemma-PAGE}} \label{proof-lemma-PAGE}

\begin{proof}
	For any randomly sampled index $i\in \{1,\dots, N\}$ and any $z',z\in \bb{Z}$, we can check that 
	\[
		\bb{E}\left[\norm{F_i(z') - F_i(z)}^2\right] \leq L_0^2 \norm{z' - z}^2,
	\]
	due to the $\frac{1}{L_0}$-co-coerciveness of $F_i$ for any $i = 1,\dots, N$. Then, a direct application of \cite[Lemma 2.1]{cai2022stochastic} gives \eqref{eq-page-1}.
	
	We next prove \eqref{eq-page-2} by mathematical induction. For the case $k = 0$, since $S_k^{(1)}$ is the set of i.i.d. samples, it holds that  
		\begin{align*}
		 &	\bb{E}\left[\norm{\widetilde{F}(z^{0}) - F(z^{0})}^2\right] = \; \bb{E}\left[\norm{\frac{1}{N^{(1)}_0}\sum_{i\in {S^{(1)}_0}}F_i(z^0)- F(z^{0})}^2\right]\\
			 = &\;  \bb{E}\left[\frac{1}{\left(N^{(1)}_0\right)^2}\sum_{i\in {S^{(1)}_0}}\norm{F_i(z^0)- F(z^{0})}^2\right] \\
			 \leq &\; \frac{1}{N^{(1)}_0}\sigma^2 \leq \frac{\epsilon^2}{2}< \sigma_0^2.
		\end{align*}
	Hence, \eqref{eq-page-2} holds for the base case. Now assume that \eqref{eq-page-2} hold for all $i < k$, then by \eqref{eq-page-1}, we have that 
		\begin{align*}
			\bb{E}\left[\norm{\widetilde{F}(z^{k}) - F(z^{k})}^2\right] \leq &\; \frac{p_k\epsilon^2}{2(k+1)^{2a}} + (1-p_k)\left( \frac{\epsilon^2}{k^{2a}} + \frac{\epsilon^2}{2(k+1)^{2a+1}}\right) \\
			 \leq &\; \frac{\epsilon^2}{(k+1)^{2a}}\left(\frac{p_k}{2} + \frac{(1-p_k)(k+1)^{2a}}{k^{2a}} + \frac{1-p_k}{2(k+1)} \right) \\
			 = &\; \frac{\epsilon^2}{(k+1)^{2a}}\left((1-p_k) \left(\frac{(k+1)^{2a}}{k^{2a}} + \frac{1}{2(k+1)} - \frac{1}{2} \right) + \frac{1}{2} \right) \\
			 = &\; \frac{\epsilon^2}{(k+1)^{2a}}\left((1-p_k) \frac{2(k+1)^{2a+1} - k^{2a+1}}{2k^{2a}(k+1)} + \frac{1}{2} \right) \\
			 = &\; \frac{\epsilon^2}{(k+1)^{2a}}\left((1-p_k)\frac{1}{2(1-p_k)} + \frac{1}{2} \right) \\
			 = &\; \frac{\epsilon^2}{(k+1)^{2a}}.
		\end{align*}
	Hence, by induction, we have shown that \eqref{eq-page-2} holds for all $k\geq 0$. This completes the proof.
\end{proof}

\subsection{Proof of Corollary \ref{corollary-epsilon-E}} \label{proof-cor-E}
\begin{proof}
	The convergence rates follow immediately from Theorem \ref{thm-rate-ishi}. Then, by the presented rates, we can get the $O(k^{-1})$ iteration complexity after some simple manipulations. Hence, we only need to verify the sample complexity. To this end, by Lemma \ref{lemma-PAGE}, we see that for $a = \frac{1}{2}$, and for $k\geq 1$, 
		\begin{align*}
			p_k = &\; 1 - \frac{\left(\frac{k}{k+1}\right)^{2a}}{2 - \left(\frac{k}{k+1}\right)^{2a+1}} = \frac{3k+2}{k^2+4k+2} < \frac{3}{k+1},\quad 1-p_k = \frac{k^2+k}{k^2+4k+2} < \frac{2k^2}{(k+1)^2},\\
			N_k^{(1)} =  &\; \left\lceil\frac{2\sigma^2}{\epsilon^2(k+1)^{-2a}} \right\rceil \leq   \frac{2(k+1)\sigma^2}{\epsilon^2} + 1,\\
			N^{(2)}_k =  &\;  \left\lceil\frac{2L_0^2\norm{z^k - z^{k-1}}^2}{\epsilon^2(k+1)^{-(2a+1)}}\right\rceil \leq 
			 \frac{2(k+1)^2L_0^2\norm{z^k - z^{k-1}}^2}{\epsilon^2} +1.
		\end{align*}
	Let $N_k$ be the sample size at the $k$-th iteration, we see that 
		\begin{align*}
		    \bb{E}[N_{k}] =&\; p_k\bb{E}\left[N_k^{(1)}\right] + (1-p_k)\bb{E}\left[N_k^{(2)}\right]\\
			\leq &\; \frac{6\sigma^2}{\epsilon^2} + \frac{4k^2L_0^2}{\epsilon^2}\bb{E}\left[\norm{z^k - z^{k-1}}^2\right] + 2 \\
         \leq &\;  O\left(\epsilon^{-2}\right) + O(k^2).
		\end{align*} 
	Note also that $N_0 = N_0^{(1)} = O(\epsilon^{-2})$. Therefore, it holds that 
	\[
		\bb{E}\left[\sum_{k = 0}^KN_k\right] \leq O(K\epsilon^{-2}) + O(K^3), \quad K\geq 0.
	\]  	
	Let $K = O(\epsilon^{-1})$, we then see that the expected total number of stochastic queries to $F$ to get an expected $O(\epsilon)$-optimal solution is $ O\left(\epsilon^{-3}\right) $.  Thus, the proof is completed.
\end{proof}

\subsection{Proof of Corollary \ref{corollary-large-E}} \label{proof-cor-large-E}

\begin{proof}
	We only verify the result of sample complexity. To this end, by Lemma \ref{lemma-PAGE}, we see that $\epsilon = 1$, and for $k\geq 1$, 
		\begin{align*}
			p_k = &\; 1 - \frac{\left(\frac{k}{k+1}\right)^{2a}}{2 - \left(\frac{k}{k+1}\right)^{2a+1}} = \frac{2(k+1)^{2a+1} - (2k+1)k^{2a}}{2(k+1)^{2a+1} - k^{2a+1}} \leq \frac{(k+1)^{2a+1} - (2k+1)k^{2a}}{(k+1)^{2a+1}},\\
			N_k^{(1)} =  &\; \left\lceil\frac{2\sigma^2}{\epsilon^2(k+1)^{-2a}} \right\rceil \leq   2(k+1)^{2a}\sigma^2 + 1,\\
			N^{(2)}_k =  &\;  \left\lceil\frac{2L_0^2\norm{z^k - z^{k-1}}^2}{\epsilon^2(k+1)^{-(2a+1)}}\right\rceil \leq 
			 2(k+1)^{2a+1}L_0^2\norm{z^k - z^{k-1}}^2 +1.
		\end{align*}
	Moreover, notice that since $a>\frac{3}{2}$, $\left(1+\frac{1}{k}\right)^{2a} \leq 1+ \frac{1}{k}\left(4^{a} - 1\right)$ for any $k\geq 1$. Then, we deduce that
		\begin{align*}
			\frac{2(k+1)^{2a+1} - (2k+1)k^{2a}}{k^{2a}} = &\; 2(k+1)\left(1+\frac{1}{k}\right)^{2a} - (2k+1)\\
			\leq &\; 2(k+1)\left(1 + \left(4^a -1\right)\frac{1}{k}\right) - (2k+1) \\
			= &\; 1 + 2\left(4^a-1\right)\left(1+\frac{1}{k}\right) \\
			= &\; O(1).
		\end{align*}
	As a consequence, we see that $p_k = O\left(\frac{k^{2a}}{(k+1)^{2a+1}}\right)$ for all $k\geq 1$.	Let $N_k$ be the sample size at the $k$-th iteration, we see that 
		\begin{align*}
		    \bb{E}[N_{k}] =&\; p_k\bb{E}\left[N_k^{(1)}\right] + (1-p_k)\bb{E}\left[N_k^{(2)}\right] \\
			\leq &\;   O\left(k^{2a-1}\right) + O\left((k+1)^{2a+1}\bb{E}\left[\norm{z^k - z^{k-1}}^2\right]\right) \\
            \leq &\; O\left(k^{2a-1}\right).
		\end{align*}
	Note that $N_0 = N_0^{(1)} = O(1)$. Then, we see that
	\[
		\bb{E}\left[\sum_{k = 0}^KN_k\right] \leq O\left(K^{2a}\right).
	\]
	Let $K = O\left(\delta^{-1}\right)$, we see that the expected total number of stochastic queries to $F$ to get an expected $O(\delta)$-optimal solution is at most $ O\left(\delta^{-2a}\right) $. This completes the proof.
\end{proof}

\end{document}